\definecolor{amethyst}{rgb}{0.6, 0.4, 0.8}
\definecolor{orange}{rgb}{1,0.5,0}
\newtheorem{Algorithm}{Algorithm}[section]
\newtheorem{Theorem}{Theorem}[section]
\newtheorem{Lemma}{Lemma}[section]
\newtheorem{Remark}{Remark}[section]
\newtheorem{Corollary}{Corollary}[section]
\newcommand{\bb}{\begin{bmatrix}}
\newcommand{\eb}{\end{bmatrix}}
\newcommand{\bl}[1]{\begin{list}{#1}{\usecounter{bean}}} \newcommand{\el}{\end{list}}
\newcommand{\bel}[1]{\begin{equation} \label{#1}} \newcommand{\eel}{\end{equation}}
\def\r2n2n{\mathbb{R}^{2n\times 2n}}
\def\c2n2n{\mathbb{C}^{2n\times 2n}}
\title{
An iterative method for solving the stable subspace of a matrix pencil and its application.
}
\author{Matthew M. Lin
\thanks{Department of Mathematics, National Cheng Kung University, Tainan 701, Taiwan. The first author was supported by the Ministry of Science and Technology of Taiwan under grant 104-2115-M-006-017-MY3. {\tt (mhlin@mail.ncku.edu.tw)}}
\and
Chun-Yueh Chiang
\thanks{Corresponding Author, Center for General Education, National Formosa University, Huwei 632, Taiwan. The second author was supported by the Ministry of Science and Technology of Taiwan under grant 105-2115-M-150-001. {\tt (chiang@nfu.edu.tw)}}
}
\begin{document}
\maketitle

\begin{abstract}
This work is to propose an iterative method of choice to compute a stable subspace of a regular matrix pencil. This approach is to define a sequence of matrix pencils via
particular left null spaces. We show that this iteration preserves a discrete-type flow depending only on the initial matrix pencil. Via this recursion relationship,  we propose an accelerated iterative method to compute the stable subspace and use it to provide a theoretical result to solve the principal square root of a given matrix, both nonsingular and singular. We show that this method can not only find out the matrix square root, but also construct an iterative approach which converges to the square root with any desired order.
\end{abstract}

\textbf{Keywords:}
Stable subspace,\,Sherman Morrison Woodbury formula,\,Matrix square root,\,Accelerated iterative method,\,Q-superlinear convergence

\pagestyle{myheadings} \thispagestyle{plain}

\section{Introduction}
%
%
Throughout this paper we shall use the following notation to facilitate our discussions. $\lambda(A)$ and $\lambda(A,B)$ denote the sets of eigenvalues of the matrix $A$ and the matrix pencil $A-\lambda B$, respectively, and let $\rho(A)$ be the spectral radius of the square matrix $A$. $\mathbb{C}^+$  and $\mathbb{C}^-$ represent the open right and left half complex planes.

Given a regular $n\times n$ matrix pencil $A- \lambda B$ (i.e., $\det(A-\lambda B)$ is not identically zero for all $\lambda$) and an integer $m\leq n$, we want to find in this work a full rank matrix $U\in\mathbb{C}^{n\times m}$ such that
\begin{equation}\label{eq:gen}
A U = B U  \Lambda,
\end{equation}
{where $\Lambda\in\mathbb{C}^{m\times m}$ and $\rho(\Lambda) < 1$.}

Note that the column space $\mathcal{U}=\mbox{Span}\{U\}$ is called the stable deflating subspace of $A-\lambda B$. Specially,
 $\mathcal{U}$ is called the stable invariant space if $B$ is the identity matrix.
Over the past few decades, considerable attention has been paid to
study the property of the invariant and deflating subspace~\cite{Gohberg06}. In application, one can obtain
the solutions of algebraic Riccati-type matrix equations by computing its corresponding  stable deflating subspaces or stable invariant subspaces, e.g., \cite{Lancaster95,Bini2012}. Particularly, this problem is related to the so-called \emph{generalized spectral divide and conquer} (SDC) problem~\cite{Bai1997,Bini2012}, which is to find a pair of left and right deflating subspaces $\mathcal{L}$ and $\mathcal{R}$ such that
\begin{equation*}
A\mathcal{R} \subset \mathcal{L},\quad B\mathcal{R} \subset \mathcal{L},
\end{equation*}
corresponding to eigenvalues of the pair $A-\lambda B$ in a specified region
$\mathcal{D}\subset \mathbb{C}$. That is, find two nonsingular partitioned matrices $U_L = \bb U_{L_1},U_{L_2}\eb$ and $U_R= \bb U_{R_1},U_{R_2}\eb$ with $\mathcal{L} = \mbox{span}(U_{L_1})$ and
$\mathcal{R} = \mbox{span}(U_{R_1})$ so that
\begin{equation*}
A U_R = U_L\bb A_{11} & A_{12} \\0 & A_{22}\eb, \quad
B U_R = U_L\bb B_{11} & B_{12} \\0 & B_{22}\eb, \quad
\end{equation*}
and the eigenvalues of $A_{11} -\lambda B_{11}$ are the eigenvalues of $A-\lambda B$ in  the region $\mathcal{D}$.
We notice that
if $A_{11} - \lambda B_{11}$ has no infinite eigenvalues, then $B_{11}$ is invertible and
\begin{equation*}
AU_{R_1} = BU_{R_1} (B_{11}^{-1} A_{11});
\end{equation*}
if $A_{11} - \lambda B_{11}$ has no zero eigenvalues, then
$A_{11}$ is invertible and
\begin{equation*}
AU_{R_1} ( A_{11}^{-1} B_{11})= BU_{R_1}.
\end{equation*}

Note that the region $\mathcal{D}$ in the SDC problem is generally assumed in the interior (or exterior) of the unit disk. Otherwise, the M\"{o}bius transformations $(\alpha A + \beta B) (\gamma A + \delta B)^{-1}$ can be applied to transform original region as a rather general region~\cite{Bai1997}.

One direct method to solve~\eqref{eq:gen} (not requires $\rho(\Lambda)<1$), is to apply the so-called QZ algorithm.
%
That is, through the QZ algorithm, the matrix $A$ is reduced to triangular or upper quasi-triangular form and $B$ to upper triangular form. One is then able to compute eigenvectors through the reduced form (see~\cite{Demmelbook00, Golub2013,Kressnerbook05} for the details).
Unlike the direct method, we propose in this work an iterative method, AB-algorithm, to solve~\eqref{eq:gen}.
This method is done by defining
 a sequence of matrix pencils $\{A_k-\lambda B_k\}$
with $(A_1, B_1) = (A,B)$ and
$(A_k, B_k) = ( \mathcal{M}_{k-1} A_{k-1},  \mathcal{N}_{k-1} B_{1})$ for any integer $k>1$. Here,
$( \mathcal{N}_{k-1}, \mathcal{M}_{k-1})$ is a solution belonging to the left null space of $ \left[\begin{array}{c}A_1 \\-B_k\end{array}\right]
$, that is,
\begin{equation}\label{eq:null}
\mathcal{N}_{k} A_{1} = \mathcal{M}_{k} B_{k}.
\end{equation}
{
 Due to the specific structure embedded in the matrix pencil $A_1-\lambda B_k$, some $( \mathcal{N}_{k}, \mathcal{M}_{k})$ can be designed such that $\{A_k-\lambda B_k\}$ have the same structure.
 We refer the reader to \cite{WWL2006,Bini2012} and to the references therein. In these works, iterative algorithms for computing invariant subspace of structured matrix pencil $A_1-\lambda B_k$  are provided with a quadratic convergence for solving algebraic Riccati and related matrix equations.
%
%
 }

Observe that $A_1 U = B_1 U \Lambda^{1}$.
Suppose this process can be continually iterated to obtain
the new pencil $A_k-\lambda B_k$ such that $A_k U = B_k U \Lambda^k$. It must be that
\begin{eqnarray}\label{eq:eig1}
 A_{k+1} U &=&  \mathcal{M}_kA_k U=  \mathcal{M}_k B_k U  \Lambda^k\nonumber
 \\
 &=&  \mathcal{N}_k A_1 U \Lambda^k =  \mathcal{N}_k B_1 U \Lambda^{k+1} = B_{k+1} U \Lambda^{k+1}.
\end{eqnarray}

This implies that if $\rho (\Lambda)< 1$, and if the sequence $\{B_k\}$  is uniformly bounded, then $\lim\limits_{k\rightarrow \infty} A_k U = 0$. Once the sequence $\{A_k\}$ also converges, say $A_\infty := \lim\limits_{k\rightarrow \infty} A_k$, we are able to solve the solution $U$ by computing the the right null space of $A_\infty$.
{We notice that
this AB-algorithm theoretically not only preserves a discrete-type flow property (see Theorem~\ref{thm:trans1}) but can be accelerated with the rate of convergence of any desired order.
To demonstrate the capability of this algorithm, we use it to provide a theoretical result to compute the matrix square root as an example. It is known that matrix square root is not unique (even up to sign) or even exists, for example,
\begin{equation*}
\left[\begin{array}{cc} 1 & 0 \\0 & 1\end{array}\right]
= \left[\begin{array}{cc}
\cos(\theta)
& \sin(\theta) \\\sin(\theta) & -\cos(\theta)\end{array}\right]^2
\end{equation*}
for any $\theta\in\mathbb{R}$ and $\left[\begin{array}{cc} 0 & 1 \\0 & 0\end{array}\right] $ does not have a square root. Indeed,  let $S \in\mathbb{C}^{n\times n}$ be a matrix having no nonpositive real eigenvalues. Then the quadratic matrix equation
\begin{equation}\label{eq:quad}
X^2 - S =0
\end{equation}
has a unique solution $X$ such that $\lambda(X) \subset \mathbb{C}^+$. This is called the principal square root of $S$ and denote it by $\sqrt{S}$~\cite{Deprima1974, Higham1987}. Numerical methods for computing the matrix principal square root, including the (modified) Schur method, Newton's method, and its variants, have been widely discussed in the numerical linear algebra community. See~\cite{Laasonen1958, Deprima1974, Higham1986,Higham1987,Higham1997,Meini2004,Highambook08,Mizuno2016} and the references therein.
Unlike conventional methods, this AB-algorithm can be modified to obtain the square root efficiently, that is, the rate of convergence of this algorithm can be of any desired order $r$. Specifically, our method is equivalent to the so-called Newton's method when $r=2$. More precisely, though preserving a similar convergence property like the Newton's method, our algorithm can be shown that, under mild adjustments, the speed of convergence can be q-superlinearly with any order~\cite{Kelly1995}.
%

This work is organized as follows.  In section 2 we provide properties of the AB-Algorithm. In section
3 we modified this AB-Algorithm so that its convergence can be of any order. In section 4 we report a
numerical application to solve the matrix square root,
and the concluding remarks are given in section 5.

%
%


\section{The AB-Algorithm and Its Corresponding Properties}

Recall that the idea of the AB-Algorithm depends heavily on the determination of the left null space of $ \left[\begin{array}{c}A_1 \\-B_k\end{array}\right]$.
Observe that
$
B_1(A_1+B_1)^{-1}A_1 - A_1(A_1+B_1)^{-1}B_1 =  0
$, if $-1\not\in\lambda(A_1,B_1)$ and $(A_1, B_1)$ are two $n\times n$ matrices. That is, $(N_1, M_1):=(B_1(A_1+B_1)^{-1},A_1(A_1+B_1)^{-1})$ is the left null space of $ \left[\begin{array}{c}A_1 \\-B_1\end{array}\right]$. Using the same procedure, we would like to generate the matrix sequences $\{A_k\}$ and $\{B_k\}$ by defining
\begin{subequations}\label{signal2}
\begin{align}
A_k&= A_1(A_1+B_{k-1})^{-1}A_{k-1}, 
\\
B_k&=B_{k-1}(A_1+B_{k-1})^{-1}B_1,
\end{align}
\end{subequations}
once the process can be iterated.

It should be noted that if $ \mathcal{M}_{k-1} = B_{k-1}(A_1+B_{k-1})^{-1}$ and $
 \mathcal{N}_{k-1} = A_1(A_1+B_{k-1})^{-1}$ for any integer $k>1$, it can be seen  that
 $B_{k-1}(A_1+B_{k-1})^{-1} A_1 = A_1 (A_1+B_{k-1})^{-1} B_{k-1}$, which satisfies the assumption~\eqref{eq:null}.
For simplicity,  we let $\Delta_{i,j}:=(A_{i}+B_{j})^{-1}$ so that the sequences $\{A_k\}$ and $\{B_k\}$ in~\eqref{signal2} can be rewritten as
  \begin{subequations}\label{eq1}
  \begin{align}
  A_k&=A_1\Delta_{1,k-1}A_{k-1}=A_{k-1}-B_{k-1}\Delta_{1,k-1}A_{k-1},\label{eq1a}\\
  B_k&=B_{k-1}\Delta_{1,k-1}B_{1}=B_{1}-A_{1}\Delta_{1,k-1}B_{1}.\label{eq1b}
  \end{align}
  \end{subequations}
 Based on~\eqref{eq1}, we propose the following AB-algorithm for computing the stable {subspace} of  the matrix pencil {$A_1-\lambda B_1$}:

\begin{Algorithm}\label{abalg}
{\emph{(AB-Algorithm)}}
\begin{enumerate}

\item Given a pencil $A_1-\lambda B_1$, initialize a tolerance  $\tau>0$ and a positive integer $kmax$.

\item For $k = 2...$, iterate {until
{$\mbox{dist}(\mbox{Null}(A_{k-1}),\mbox{Null}(A_{k}))< \tau$} or {$k>kmax$}.}
\begin{enumerate}
\item $A_k = A_1\Delta_{1,k-1}A_{k-1}$,
\item $B_k = B_{k-1}\Delta_{1,k-1}B_1$,
\end{enumerate}
\end{enumerate}
{
Here, ``Null$(\cdot)$'' denotes the null space of the given matrix and ``dist$(\cdot,\cdot)$''denotes the distance between two subspaces~\cite{Golub2013}.}
\end{Algorithm}
%



Note that on the one hand, Algorithm~\ref{abalg} provides an alternative approach for finding the stable invariant subspace $U$ (i.e., $A_1 U = U\Lambda$ and $\rho(\Lambda) < 1$) of the matrix $A_1$ by constructing $A_\infty$ (once it exists) directly as follows:
%
\begin{Remark}
If no breakdown occurs in Algorithm~\ref{abalg}
and $B_1=I_n$, for any integer $k>1$ we have
\begin{subequations}
\begin{align}
A_k &=A_1^k(
\sum_{j= 0}^{k-1} A_1^j
)^{-1},\label{s1}\\
B_k &=(\sum_{j= 0}^{k-1} A_1^j
)^{-1}.\label{s2}
\end{align}
\end{subequations}
In other words, to obtain the stable subspace of the matrix $A_1$, we only need to focus on the {iterations} generated by \eqref{s1}.
\end{Remark}

%
%
%
On the other hand, once the iteration is available, we are interested in characterizing the transformation of eigenvalues of the matrix pencil $A_1-\lambda B_1$ after each iteration.
 First, we give an observation about the relationship between the eigenvalues of $A_k-\lambda B_k$ and the eigenvalues of $A_1-\lambda B_1$. Since the proof can be read off from~\eqref{eq:eig1}, we omit our proof here.
\begin{Lemma}\label{lem:eig}
Let $A_1-\lambda B_1$ be a regular matrix pencil, and let
$\{A_k-\lambda B_k\}$ be the sequence of matrix pencils generated by Algorithm~\ref{abalg}, if no breakdown occurs.
If $\lambda\in\lambda(A_1,B_1)$ with $\lambda\in\mathbb{C}\cup\{\infty\}$, then $\lambda^k\in\lambda(A_k,B_k).\, (Here, \infty^k:=\infty)$
\end{Lemma}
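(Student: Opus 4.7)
The plan is to prove the lemma by induction on $k$, following almost verbatim the computation already displayed in equation~\eqref{eq:eig1}, and then treat the infinite eigenvalue as a separate short case.

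First I would fix a finite eigenvalue $\lambda \in \lambda(A_1,B_1) \cap \mathbb{C}$ with associated eigenvector $u \neq 0$, so that $A_1 u = \lambda B_1 u$. I would then prove by induction on $k \geq 1$ that
\begin{equation*}
A_k u = \lambda^k B_k u,
\end{equation*}
which exhibits $\lambda^k$ as an eigenvalue of $A_k - \lambda B_k$ (with eigenvector $u$). The base case $k=1$ is the definition of $\lambda$. For the inductive step, I would substitute the recurrences~\eqref{eq1a}--\eqref{eq1b} and use the identity $A_1 \Delta_{1,k} B_k = B_k \Delta_{1,k} A_1$ (which is exactly the left null space condition~\eqref{eq:null} satisfied by $(\mathcal{N}_k,\mathcal{M}_k) = (A_1\Delta_{1,k}, B_k \Delta_{1,k})$) to slide the scalar $\lambda^k$ past the kernel:
\begin{equation*}
A_{k+1} u \;=\; A_1 \Delta_{1,k} A_k u \;=\; \lambda^k A_1 \Delta_{1,k} B_k u \;=\; \lambda^k B_k \Delta_{1,k} A_1 u \;=\; \lambda^{k+1} B_k \Delta_{1,k} B_1 u \;=\; \lambda^{k+1} B_{k+1} u.
\end{equation*}
This is essentially the chain of equalities~\eqref{eq:eig1}, now applied to a single eigenvector.

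For the infinite eigenvalue $\lambda = \infty$, I would take $u \neq 0$ with $B_1 u = 0$ (and necessarily $A_1 u \neq 0$ by regularity of the pencil) and show by induction that $B_k u = 0$ for every $k \geq 1$. The inductive step is immediate from~\eqref{eq1b}: $B_{k+1} u = B_k \Delta_{1,k} B_1 u = 0$. Since $u \neq 0$ lies in the right null space of $B_k$, the matrix $B_k$ is singular and hence $\infty = \infty^k \in \lambda(A_k, B_k)$, matching the convention $\infty^k := \infty$ stated in the lemma.

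I do not anticipate a real obstacle here; the assumption that no breakdown occurs in Algorithm~\ref{abalg} guarantees that each $\Delta_{1,k-1}$ is well-defined, and the only mild subtlety is making explicit the commutation $A_1 \Delta_{1,k} B_k = B_k \Delta_{1,k} A_1$ that was implicit in the derivation of~\eqref{eq:eig1}. Since this is precisely the property~\eqref{eq:null} that motivated the choice of $(\mathcal{N}_k,\mathcal{M}_k)$, the induction closes without additional machinery.
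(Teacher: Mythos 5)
Your proof is correct and is essentially the argument the paper itself invokes: the paper omits a formal proof and points to the chain of equalities in~\eqref{eq:eig1}, and your induction is exactly that chain applied to a single eigenvector, using the commutation $A_1\Delta_{1,k}B_k = B_k\Delta_{1,k}A_1$ that underlies~\eqref{eq:null}. Your separate treatment of $\lambda=\infty$ via $B_k u = 0$ is a sensible way to make explicit the case the paper leaves implicit, and it introduces no new machinery.
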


%
Subsequently, we have the following theorem which gives rise to the
appearance of new eigenvalues induced by the AB-algorithm.
{

\begin{Theorem}\label{Lem:eig}
Let $A_1-\lambda B_1$ be a regular matrix pencil, and let
$\{A_k-\lambda B_k\}$ be the sequence of matrix pencils generated by Algorithm~\ref{abalg}, if no breakdown occurs.
Let  $\{{\lambda}_1^{(i,k)},\ldots,{\lambda}_n^{(i,k)}\}$
be the set of eigenvalues of the matrix pencils $A_{i}-\lambda B_{k}$ for any two positive integers $i$ and $k$. Then,
for $1\leq j \leq n$,
the set of eigenvalues has the following properties:
\begin{enumerate}
\item
$
\lambda_j^{(1,k)}=\left\{
                 \begin{array}{rl}
          \sum\limits_{s=1}^{k}(\lambda_j^{(1,1)})^s,&\lambda_j^{(1,1)}\in\mathbb{C}, \\
                    \infty,&\lambda_j^{(1,1)}=\infty.\\
                 \end{array}
 \right.
$
\item
$
{\lambda}_j^{(i,1)}=\left\{
                 \begin{array}{rl}
          \dfrac{(\lambda_j^{(1,1)})^i}{\sum\limits_{s=0}^{i-1}(\lambda_j^{(1,1)})^s},&\lambda_j^{(1,1)}\in\mathbb{C}, \\
                    \infty,&\lambda_j^{(1,1)}=\infty.\\
                 \end{array}
 \right.
$
%
%
\item 
$
{\lambda}_j^{(i,k)}=\left\{
                 \begin{array}{rl}
          (\lambda_j^{(1,1)})^i\dfrac{\sum\limits_{s=0}^{k-1}(\lambda_j^{(1,1)})^s}{\sum\limits_{s=0}^{i-1}(\lambda_j^{(1,1)})^s},&\lambda_j^{(1,1)}\in\mathbb{C}, \\
                    \infty,&\lambda_j^{(1,1)}=\infty.\\
                 \end{array}
 \right.
 $
\end{enumerate}
\end{Theorem}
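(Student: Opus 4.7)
The plan is to use the generalized Schur (QZ) decomposition to reduce the whole argument to a scalar recurrence on the diagonal entries of a jointly upper triangular form. First I would apply QZ to the regular pencil $A_1-\lambda B_1$ to obtain unitary $Q,Z$ with $Q^H A_1 Z = T_A$ and $Q^H B_1 Z = T_B$ both upper triangular, with diagonals $\{a_j\}$ and $\{b_j\}$ satisfying $\lambda_j^{(1,1)} = a_j/b_j$ (finite when $b_j\neq 0$, and $\infty$ exactly when $b_j=0$, since regularity forbids $a_j=b_j=0$).

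Next I would prove by induction on $k$ that $A_k = Q T_{A,k} Z^H$ and $B_k = Q T_{B,k} Z^H$ with $T_{A,k},T_{B,k}$ upper triangular. Substituting into~\eqref{eq1} gives the ``triangular'' iteration
\begin{equation*}
T_{A,k} = T_A(T_A+T_{B,k-1})^{-1}T_{A,k-1}, \qquad T_{B,k} = T_{B,k-1}(T_A+T_{B,k-1})^{-1}T_B,
\end{equation*}
and upper triangularity is preserved under sums, products, and inverses. Since the $(j,j)$-entry of a product of upper triangular matrices is the product of the corresponding $(j,j)$-entries, the scalar sequences $\alpha_k := (T_{A,k})_{jj}$ and $\beta_k := (T_{B,k})_{jj}$ obey $\alpha_k = a_j\alpha_{k-1}/(a_j+\beta_{k-1})$, $\beta_k = b_j\beta_{k-1}/(a_j+\beta_{k-1})$, with $\alpha_1=a_j$, $\beta_1=b_j$. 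A short induction then yields the closed form $\alpha_k = a_j^k/\sigma_k$ and $\beta_k = b_j^k/\sigma_k$, where $\sigma_k := \sum_{s=0}^{k-1} a_j^{k-1-s}b_j^s$; the inductive step reduces to the telescoping identity $a_j\sigma_{k-1}+b_j^{k-1}=\sigma_k$.

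Finally, because $A_i-\mu B_k$ is upper triangular in the QZ basis, its eigenvalues are just the diagonal ratios $\alpha_i/\beta_k$ (interpreted as $\infty$ when $\beta_k=0$). When $b_j\neq 0$, factoring out $b_j$ rewrites $\sigma_k = b_j^{k-1}\sum_{s=0}^{k-1}\lambda_j^{s}$ with $\lambda_j := \lambda_j^{(1,1)}$, so $\alpha_i/\beta_k = \lambda_j^i\,\sum_{s=0}^{k-1}\lambda_j^s\big/\sum_{s=0}^{i-1}\lambda_j^s$, which is item~3; items~1 and~2 drop out by setting $i=1$ or $k=1$. When $b_j=0$, direct substitution gives $\sigma_k = a_j^{k-1}$ and $\beta_k = 0$, so $\lambda_j^{(i,k)}=\infty$ for all $i,k$, matching the stated formula.

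The only delicate point I expect is bookkeeping: verifying that joint upper triangularity really is preserved under the asymmetric sandwich by $Q$ on the left and $Z^H$ on the right, and confirming that the ``no breakdown'' hypothesis of Algorithm~\ref{abalg} is precisely what makes the denominators $a_j+\beta_{k-1}$ nonzero so that both the matrix and scalar recurrences are well defined. Once these are in place, the remainder is a one-line induction plus the algebraic simplification $\sigma_k=b_j^{k-1}\sum_{s=0}^{k-1}\lambda_j^s$.
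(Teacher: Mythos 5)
Your proposal is correct and follows essentially the same route as the paper: the paper likewise assumes (via the generalized Schur/QZ form) that $A_1,B_1$ are jointly upper triangular, notes that the iteration \eqref{signal2} preserves triangularity, and reduces everything to scalar relations on the diagonal entries. The only difference is cosmetic: you solve the diagonal recursion in closed form, $\alpha_k=a_j^k/\sigma_k$, $\beta_k=b_j^k/\sigma_k$ with $\sigma_k=\sum_{s=0}^{k-1}a_j^{k-1-s}b_j^{s}$, which yields all three items (and the infinite-eigenvalue case) at once, whereas the paper works with recurrences for the eigenvalue ratios $\lambda_j^{(1,k)},\lambda_j^{(i,1)},\lambda_j^{(i,k)}$ and invokes Lemma~\ref{lem:eig} for $\lambda_j^{(i,i)}=(\lambda_j^{(1,1)})^i$; both are valid, and your ``no breakdown'' remark correctly identifies why the denominators $a_j+\beta_{k-1}$ are nonzero.
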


%

\begin{proof}
Assume without loss of generality that $A_1$ and $B_1$ are upper triangular matrices. Otherwise, let $U$ and $V$ be two unitary matrices such that $U^H A_1 V$ and $U^H B_1 V$ both are upper triangular matrices. Upon using~\eqref{signal2},  it can be seen that $A_k$ and $B_k$ are also upper triangular, and
 \begin{eqnarray*}
\lambda_j^{(1,k)}&=&\left\{
                 \begin{array}{rl}
          (1+\lambda_j^{(1,k-1)})\lambda_j^{(1,1)},&\lambda_j^{(1,1)}\in\mathbb{C}, \\
                    \infty,&\lambda_j^{(1,1)}=\infty,\\
                 \end{array}
 \right.\\
%
 {\lambda}_j^{(i,1)} &=&\left\{
                 \begin{array}{rl}
          \dfrac{{\lambda}_j^{(i,i)}}{
          1+{\lambda}_j^{(1,i-1)}},&\lambda_j^{(1,1)}\in\mathbb{C}, \\
                    \infty,&\lambda_j^{(1,1)}=\infty.\\
                 \end{array}
 \right.
\end{eqnarray*}
Moreover,
  \begin{equation*}
 {\lambda}_j^{(i,k)}=\left\{
                 \begin{array}{rl}
          (1+{\lambda}_j^{(1,k-1)}){\lambda}_j^{(i,1)},&\lambda_j^{(1,1)}\in\mathbb{C}, \\
                    \infty,&\lambda_j^{(1,1)}=\infty,\\
                 \end{array}
 \right.
\end{equation*}%
for $i, k\geq 2$. We remark that $\lambda_j^{(1,i-1)}\neq -1$ since $A_i-\lambda B_i$ is well-defined, and from Lemma~\ref{lem:eig}, we have $\lambda_j^{(i,i)}=(\lambda_j^{(1,1)})^i$, which completes the proof of the theorem.

\end{proof}
}

We notice that Algorithm~\ref{abalg} is workable { if and only if} the sum of matrices $A_1$ and $B_{k-1}$, for any integer $k>1$, is invertible, that is, {$-1\not\in\lambda(A_1,B_{k-1})$}, for any integer $k>1$.
 This capacity can be completely characterized by the
$p$th roots of unity, except itself.

%
\begin{Theorem}\label{thm:well-defined}
Let {$A_1-\lambda B_1$} be a regular matrix pencil, and let
\begin{equation*}
S_k= \bigcup\limits_{2\leq p\leq k+1}
\{
e^{\frac{2q\pi i}{p}} : {1\leq q\leq p-1}
\}.
\end{equation*}
If
\begin{equation*}
S_k \cap \lambda(A_1,B_{1}) = \phi,
\end{equation*}
then the sequence of matrix pencils
{$A_k-\lambda B_k$}, for any integer $k\geq 1$, can be generated using Algorithm~\ref{abalg}, or, generally,
all sequences of matrices $\{A_k-\lambda B_k\}$ generated by iterations \eqref{signal2} with the initial matrix pencil $A_1-\lambda B_1$ are no breakdown, if
{
\begin{align}\label{cond}
S_\infty\cap\lambda(A_1,B_{1})=\phi.
\end{align}}

\end{Theorem}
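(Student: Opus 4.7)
The plan is to reduce the no-breakdown condition at each step to a spectral condition on the original pencil $A_1-\lambda B_1$, and then apply Theorem~\ref{Lem:eig} to express that condition purely in terms of roots of unity. Looking at Algorithm~\ref{abalg}, the only place a breakdown can occur when producing $A_k-\lambda B_k$ is in the inversion $\Delta_{1,k-1}=(A_1+B_{k-1})^{-1}$. So I would begin by recording the equivalence
\[
A_1+B_{k-1}\text{ is singular}\iff -1\in\lambda(A_1,B_{k-1}),
\]
which pushes all analysis onto the spectrum of the intermediate pencil $A_1-\lambda B_{k-1}$.

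Next I would invoke Theorem~\ref{Lem:eig}(1), which expresses $\lambda(A_1,B_{k-1})$ directly in terms of $\lambda(A_1,B_1)$: for a finite eigenvalue $\mu=\lambda_j^{(1,1)}$ we have $\lambda_j^{(1,k-1)}=\sum_{s=1}^{k-1}\mu^s$, and infinite eigenvalues are preserved. Since $\infty\neq -1$, the infinite part of the spectrum never causes a breakdown, so attention narrows to the finite part. The breakdown condition $\sum_{s=1}^{k-1}\mu^s=-1$ rewrites as $\sum_{s=0}^{k-1}\mu^s=0$, and the standard factorization
\[
\sum_{s=0}^{k-1}\mu^s=\frac{\mu^{k}-1}{\mu-1}\quad(\mu\neq 1),\qquad \sum_{s=0}^{k-1}1=k\neq 0,
\]
shows the sum vanishes exactly when $\mu$ is a $k$-th root of unity different from $1$, that is, $\mu\in\{e^{2q\pi i/k}:1\le q\le k-1\}$.

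I would then assemble the proof by induction on $k$. Inductively, the pencils $A_2-\lambda B_2,\ldots,A_{k+1}-\lambda B_{k+1}$ all exist if and only if, for each $p=2,3,\ldots,k+1$, no eigenvalue of $A_1-\lambda B_1$ is a $p$-th root of unity different from $1$. This is precisely the hypothesis $S_k\cap\lambda(A_1,B_1)=\emptyset$ with $S_k=\bigcup_{2\le p\le k+1}\{e^{2q\pi i/p}:1\le q\le p-1\}$. The unconditional statement then follows by taking the union over all $k$: if $S_\infty\cap\lambda(A_1,B_1)=\emptyset$, the induction never terminates and the entire infinite sequence $\{A_k-\lambda B_k\}$ is well defined.

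The substantive content is essentially a one-line application of Theorem~\ref{Lem:eig} combined with the factorization of $1+z+\cdots+z^{k-1}$; the only delicate point is the bookkeeping of the index set $S_k$ to match the statement's convention, and verifying that the edge cases $\mu=1$ and $\mu=\infty$ automatically avoid breakdown (they do, by the direct computations above). Hence I do not expect any serious obstacle.
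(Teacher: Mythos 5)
Your argument is correct and is essentially the proof the paper intends (the paper leaves it implicit): breakdown can only occur in inverting $A_1+B_{k-1}$, which by Theorem~\ref{Lem:eig}(1) happens exactly when some finite eigenvalue $\mu$ of $A_1-\lambda B_1$ satisfies $\sum_{s=0}^{k-1}\mu^s=0$, i.e.\ $\mu$ is a $k$-th root of unity other than $1$, and the induction plus the bookkeeping of $S_k$ is exactly right. No gaps; the edge cases $\mu=1$, $\mu=0$, $\mu=\infty$ are handled correctly.
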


%
%
%

%

%
%

\begin{Corollary}\label{cor:dif}
For any positive integers $i,j$ and $k$, we have $A_k-B_k=A_1-B_1$, that is, $A_i-A_j=B_i-B_j$, provided that {$S_{\max\{i,j,k\}}\cap\lambda(A_1,B_{1})=\phi$}.

\end{Corollary}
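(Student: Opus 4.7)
The plan is to prove by induction on $\ell$ that $A_\ell - B_\ell = A_1 - B_1$ for every $\ell \geq 1$; the two-index form $A_i - A_j = B_i - B_j$ follows immediately by subtracting two instances. The hypothesis $S_{\max\{i,j,k\}} \cap \lambda(A_1, B_1) = \phi$ enters only through Theorem~\ref{thm:well-defined}, which guarantees that every resolvent $\Delta_{1,\ell-1}$ invoked by the iteration~\eqref{eq1} is well-defined up to the largest index appearing.

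The base case $\ell=1$ is vacuous. For the inductive step I would first rewrite the inductive hypothesis $A_{\ell-1} - B_{\ell-1} = A_1 - B_1$ in the crucial equivalent form
\[
A_{\ell-1} + B_1 \;=\; A_1 + B_{\ell-1} \;=\; \Delta_{1,\ell-1}^{-1}.
\]
This reformulation is the heart of the proof. Coupled with the elementary identity $A_1\Delta_{1,\ell-1} + B_{\ell-1}\Delta_{1,\ell-1} = I$, which is just $(A_1+B_{\ell-1})\Delta_{1,\ell-1}=I$, the second (subtractive) forms in~\eqref{eq1a} and~\eqref{eq1b} combine to give
\[
A_\ell - B_\ell \;=\; A_{\ell-1} - B_{\ell-1}\Delta_{1,\ell-1}\bigl(A_{\ell-1} + B_1\bigr).
\]
The bracketed factor collapses to $\Delta_{1,\ell-1}^{-1}$ by the inductive hypothesis, so $A_\ell - B_\ell = A_{\ell-1} - B_{\ell-1}$, which closes the induction.

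There is no serious obstacle here; the only conceptual step is noticing that the inductive hypothesis itself makes $A_{\ell-1} + B_1$ equal to the inverse of $\Delta_{1,\ell-1}$, after which the cancellation is forced. One mild pitfall is that the factored forms $A_\ell = A_1\Delta_{1,\ell-1}A_{\ell-1}$ and $B_\ell = B_{\ell-1}\Delta_{1,\ell-1}B_1$ do not telescope directly; it is essential to use the subtractive rewritings in~\eqref{eq1a} and~\eqref{eq1b} in order to expose the common factor $B_{\ell-1}\Delta_{1,\ell-1}$ that drives the argument.
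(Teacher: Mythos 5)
Your induction is correct and is essentially the paper's own argument: both proofs reduce $A_\ell-B_\ell$ to $A_{\ell-1}-B_{\ell-1}\Delta_{1,\ell-1}(A_{\ell-1}+B_1)$ and then invoke the induction hypothesis in the form $A_{\ell-1}+B_1=A_1+B_{\ell-1}=\Delta_{1,\ell-1}^{-1}$ to cancel the resolvent. The only cosmetic difference is that the paper expands $B_\ell$ via the product form $B_{\ell-1}\Delta_{1,\ell-1}B_1$ of~\eqref{eq1b} (so your remark that the subtractive rewritings are ``essential'' is not quite accurate), but this does not affect the validity of your argument.
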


\begin{proof}
The proof is by induction on $k$. When $k=1$, the result is evident. Suppose we have proved this corollary for $k =\ell$. Then, by the induction hypothesis
\begin{align*}
A_{\ell+1}-B_{\ell+1}&=A_\ell-B_\ell\Delta_{1,\ell}A_\ell-B_\ell\Delta_{1,\ell}B_1\\
&=A_\ell-B_\ell\Delta_{1,\ell}(A_\ell+B_1)=A_\ell-B_\ell=A_1-B_1.
\end{align*}


This completes the proof.
\end{proof}
{
From Corollary~\ref{cor:dif}, each step of $B_k$ can be obtained by $B_k=A_k+B_1-A_1$.
We conclude that the counts of Algorithm~\ref{abalg} for one iteration is $\frac{14}{3}n^3$ flops. This is because the computation is preliminary determined by the product of two $n\times n$ matrices, the calculation of the Gaussian elimination with partial pivoting, and the performance of solving $n$ lower triangular systems and $n$ upper triangular systems. Hence, the calculation of the counts contains a PLU factorization $(\mbox{cost}:\frac{2}{3}n^3$ flops) and two multiplication $(\mbox{cost}:4n^3$ flops).
Here, we ignore any $O(n^2)$ operation counts and the memory counts. We notice that the computational cost of QZ algorithm is about $46n^3$ flops (the right eigenvectors are desired). On the other hand, it follows from Theorem~\ref{thm:well-defined} that Algorithm~\ref{abalg} is well-defined, once~\eqref{cond} is satisfied.  Here, we use Gaussian elimination with partial pivoting, which is known to perform well and usually eliminate the numerical instability in practice~\cite{WICS:WICS164}, to compute the matrix inverse so that the iteration will not terminate prematurely. To perform the error analysis and decide the numerical stability of Algorithm~\ref{abalg}, the reader is referred to~\cite{HUANG20091452} for a similar discussion.
}

We remark that Corollary~\ref{cor:dif} also implies that
$\lim\limits_{k\rightarrow\infty} A_k$ exists if and only if $\lim\limits_{k\rightarrow\infty} B_k$ exists. Note that in~\eqref{signal2}, the iterations of {the matrix pencils $A_k-\lambda B_k$, for $k\geq 1$, are relative to the initial pencil $A_1-\lambda B_1$}. We would like to derive a more general iterative method, which are easily accessible through any initial pencil {$A_i-\lambda B_i$}.  To this purpose, we shall first introduce the well-known Sherman Morrison Woodbury formula (SMWF).
\begin{Lemma}\cite{Bernstein2005}\label{Schur}
Let $A$ and $B$ be two arbitrary matrices of size $n$, and  let $X$ and $Y$ be two $n\times n$ nonsingular matrices. Assume that $Y^{-1}\pm BX^{-1}A$ is nonsingular. Then, $X\pm A Y B$ is invertible and
 \[
(X\pm A Y B)^{-1}=X^{-1}\mp X^{-1}A(Y^{-1}\pm B X^{-1}A)^{-1}BX^{-1}.
\]
\end{Lemma}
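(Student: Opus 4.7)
The plan is to prove this classical Sherman--Morrison--Woodbury identity by direct verification: I would exhibit the claimed expression on the right as a right inverse of $X\pm AYB$ and then invoke squareness to promote it to a two-sided inverse, which simultaneously settles invertibility of $X\pm AYB$ and identifies its inverse.

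Concretely, set $Z := Y^{-1}\pm BX^{-1}A$, nonsingular by hypothesis, and let $W := X^{-1}\mp X^{-1}AZ^{-1}BX^{-1}$ denote the candidate inverse. I would compute $(X\pm AYB)W$, distribute into four terms, cancel $XX^{-1}=I$, and then factor $A$ on the left and $BX^{-1}$ on the right out of the three remaining correction terms. The resulting middle bracket can be manipulated into the form $Y - YZZ^{-1}$, which is manifestly zero; hence the entire correction collapses and $(X\pm AYB)W = I$. Because $X\pm AYB$ is square of the same size as its purported inverse, this one-sided identity is enough to conclude that $X\pm AYB$ is invertible with inverse $W$, so no separate verification of $W(X\pm AYB)=I$ is required.

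The only obstacle worth flagging is purely notational bookkeeping: the $\pm$ appearing in $X\pm AYB$ must be kept synchronized with the $\mp$ in $W$ and the $\pm$ in the definition of $Z$. The crucial observation is that the product of mixed signs $(\pm)(\mp)$ is always $-1$, independent of the branch chosen; this is precisely what causes the cross term $AYBX^{-1}AZ^{-1}BX^{-1}$ to appear with a negative sign under either convention, so the two branches of the lemma can be handled in a single unified computation rather than as separate cases. Beyond this the verification is entirely algebraic, requires no auxiliary spectral assumption, and presents no conceptual difficulty.
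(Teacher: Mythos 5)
Your verification is correct: with $Z=Y^{-1}\pm BX^{-1}A$ and $W=X^{-1}\mp X^{-1}AZ^{-1}BX^{-1}$, expanding $(X\pm AYB)W$ and factoring $A$ and $BX^{-1}$ out of the correction terms indeed leaves a bracket of the form $\pm\bigl(Y-YZZ^{-1}\bigr)=0$, and since $X\pm AYB$ is square, the one-sided identity $(X\pm AYB)W=I$ does give invertibility with $W$ as the two-sided inverse. Note that the paper offers no proof of this lemma at all — it is simply quoted from \cite{Bernstein2005} — so there is nothing to compare against; your direct-expansion argument is the standard proof of the Sherman--Morrison--Woodbury identity and is complete, including the correct observation that the mixed signs make the cross term negative in both branches.
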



This lemma gives a useful method to prove the following result.

\begin{Theorem}\label{thm:trans1}
Let the assumption~\eqref{cond} holds and {$\{A_k-\lambda B_k\}$} be the sequence of matrix pencils obtained by~\eqref{signal2} with initial {$A_1-\lambda B_1$}. Then,
\begin{subequations}\label{tran2}
\begin{align}
A_{i+j}&=A_{i}(A_{i}+B_{j})^{-1}A_{j},\\
B_{i+j}&=B_{j}(A_{i}+B_{j})^{-1}B_{i},
\end{align}
where $i$ and $j$ are any two positive integers.
\end{subequations}
\end{Theorem}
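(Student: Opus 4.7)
My plan is to prove both identities in~\eqref{tran2} by induction on $j$, with $i\geq 1$ arbitrary, using the recurrence~\eqref{eq1} and the Sherman--Morrison--Woodbury formula (Lemma~\ref{Schur}) at each step. The case $i=1$ (for any $j$) is immediate from~\eqref{eq1a}--\eqref{eq1b}, so the first nontrivial step is the ``reversed'' base case $j=1$: one must verify $A_{i+1}=A_i(A_i+B_1)^{-1}A_1$ and $B_{i+1}=B_1(A_i+B_1)^{-1}B_i$ for every $i\geq 1$.

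By Corollary~\ref{cor:dif} we have $A_1+B_i=A_i+B_1$, so the definition gives $A_{i+1}=A_1(A_i+B_1)^{-1}A_i$; the target identity is therefore equivalent to the commutation
\[
A_1(A_i+B_1)^{-1}A_i=A_i(A_i+B_1)^{-1}A_1,
\]
together with its $B$-analogue. I would handle these by a sub-induction on $i$. The inductive hypothesis supplies the two equivalent forms $A_i=A_1(A_{i-1}+B_1)^{-1}A_{i-1}=A_{i-1}(A_{i-1}+B_1)^{-1}A_1$. Rewriting $A_i+B_1=(A_{i-1}+B_1)-B_{i-1}(A_{i-1}+B_1)^{-1}A_{i-1}$ through the alternative form in~\eqref{eq1a} and applying Lemma~\ref{Schur} to this rank-$n$ perturbation expresses $(A_i+B_1)^{-1}$ in terms of $(A_{i-1}+B_1)^{-1}$, after which the commutation follows from the inductive hypothesis.

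For the inductive step $j-1\to j$ with $i\geq 2$, I would apply the defining relation $A_{i+j}=A_1(A_1+B_{i+j-1})^{-1}A_{i+j-1}$ and substitute the hypothesis $A_{i+j-1}=A_i(A_i+B_{j-1})^{-1}A_{j-1}$ and $B_{i+j-1}=B_{j-1}(A_i+B_{j-1})^{-1}B_i$. Setting $C=A_i+B_{j-1}$, the inverse $(A_1+B_{j-1}C^{-1}B_i)^{-1}$ collapses via a single application of Lemma~\ref{Schur}; the definitions of $A_j$ and $B_j$, combined with Corollary~\ref{cor:dif}, then reassemble the resulting product into $A_i(A_i+B_j)^{-1}A_j$. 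The identity for $B_{i+j}$ follows by the symmetric argument.

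The main obstacle is that assumption~\eqref{cond} guarantees invertibility only for sums of the form $A_p+B_q$, not for $A_k$ or $B_k$ individually. Each application of Sherman--Morrison--Woodbury therefore has to be arranged so that every matrix inverted, namely the ``seed'' $X$ of Lemma~\ref{Schur} as well as every auxiliary $Y^{-1}\pm BX^{-1}A$ appearing in the formula, reduces via Corollary~\ref{cor:dif} to some admissible $A_{p'}+B_{q'}$. Keeping track of which sums arise after each substitution, and verifying that none of them fall outside this admissible family, is the delicate bookkeeping at the heart of the argument.
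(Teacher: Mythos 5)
Your overall architecture is the same as the paper's: a two-stage induction (first the $j=1$ identities for all $i$, then an induction on the second index), with Lemma~\ref{Schur} applied at each stage and Corollary~\ref{cor:dif} used to keep every inverted matrix of the admissible form $A_p+B_q$; your base case is exactly the paper's first part. The genuine problem is in your inductive step $j-1\to j$. Anchoring on $A_{i+j}=A_1\Delta_{1,i+j-1}A_{i+j-1}$ and expanding $(A_1+B_{j-1}\Delta_{i,j-1}B_i)^{-1}$ by one application of Lemma~\ref{Schur}, the only arrangement with an admissible seed \emph{and} an admissible auxiliary matrix is to rewrite $B_{i+j-1}=B_i-A_i\Delta_{i,j-1}B_i$ and take the seed $X=A_1+B_i$; the auxiliary is then $A_i+B_{j-1}-B_i\Delta_{1,i}A_i=A_{i+1}+B_{j-1}$, giving $\Delta_{1,i+j-1}=\Delta_{1,i}+\Delta_{1,i}A_i\Delta_{i+1,j-1}B_i\Delta_{1,i}$, and carrying the reassembly through yields $A_{i+j}=A_{i+1}\Delta_{i+1,j-1}A_{j-1}$ and $B_{i+j}=B_{j-1}\Delta_{i+1,j-1}B_{i+1}$, i.e.\ the identity at the pair $(i+1,j-1)$, not at $(i,j)$. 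The other arrangement, with seed $X=A_1+B_{j-1}$, produces the auxiliary $A_i+B_{j-1}-A_i\Delta_{1,j-1}B_{j-1}$, which equals $A_i+B_j$ only up to the nonvanishing term $(B_i-B_{j-1})\Delta_{j-1,1}B_{j-1}$, so ``the definitions of $A_j$ and $B_j$ plus Corollary~\ref{cor:dif}'' do not reassemble it into $A_i\Delta_{i,j}A_j$. As written, your single-SMW step therefore advances the \emph{first} index rather than the second, and the induction on $j$ does not close.

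The defect is repairable in two ways, and it is worth seeing both. Either re-index: the identity your manipulation actually produces is precisely the step of an induction on the first index with the second index held fixed, whose base case $(1,m)$ is just \eqref{signal2}; this closes the whole theorem, and in fact needs neither the $j=1$ sub-induction nor Corollary~\ref{cor:dif}, since the seed $A_1+B_i$ and auxiliary $A_{i+1}+B_m$ are already of admissible form --- a slightly leaner route than the paper's. Or follow the paper: anchor instead on the reversed form $A_{i+j}=A_{i+j-1}\Delta_{i+j-1,1}A_1$ supplied by your base case, expand $\Delta_{i+j-1,1}$ around the seed $A_{j-1}+B_1$ so that the auxiliary becomes $A_i+B_j$ (here the reversed identities $A_j=A_{j-1}\Delta_{j-1,1}A_1$ and $B_j=B_1\Delta_{j-1,1}B_{j-1}$ from the base case are what make the reassembly work); this is, in substance, the paper's pair of coupled SMW expansions of $\Delta_{i,j}$ and $\Delta_{i+j-1,1}$. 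Your final paragraph correctly identifies the admissibility bookkeeping as the delicate point, but the step as you described it is exactly where that bookkeeping fails.
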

\begin{proof}
This proof is divided into two parts.
We first fix $j=1$ and show that the statement~\eqref{tran2} is true for any positive integer $i$.
We prove by induction on $i$. When $i =1$, the statement~\eqref{tran2} is definitely true from the definition of $A_2$ and $B_2$. Suppose~\eqref{tran2} is true for $i=s$.  It follows from Lemma~\ref{Schur} that
%
\begin{align*}
\Delta_{1,s+1}
&=(A_1+B_s-A_s\Delta_{s,1}B_s)^{-1}\\
&=\Delta_{1,s}+\Delta_{1,s}A_{s}(A_s+B_1-B_s\Delta_{1,s}A_s)^{-1}B_{s}\Delta_{1,s}\\
&=\Delta_{1,s}+\Delta_{1,s}A_{s}\Delta_{1+s,1}B_{s}\Delta_{1,s},\\
\Delta_{1+s,1} &=(A_s-B_s\Delta_{1,s}A_s +B_1)^{-1}\\
&=\Delta_{s,1}+\Delta_{s,1}B_{s}(A_1+B_s-A_s\Delta_{s,1}B_s)^{-1}A_{s}\Delta_{s,1}\\
&=\Delta_{s,1}+\Delta_{s,1}B_s\Delta_{1,s+1}A_s\Delta_{s,1}.
%
\end{align*}
Thus, we have
 \begin{align*}
A_{(s+1)+1}&=A_{1+(s+1)}=A_{s+1}-B_{s+1}\Delta_{1,s+1}A_{s+1}\\
&=A_1-B_{1}\left[\Delta_{s,1}+\Delta_{s,1}B_s\Delta_{1,s+1}A_s\Delta_{s,1}\right]A_{1}\\
&=A_{1}-B_{1}\Delta_{s+1,1}A_{1}=A_{s+1}\Delta_{s+1,1}A_{1},\\
B_{(s+1)+1}&=B_{1+(s+1)}=B_{1}-A_{1}\Delta_{1,s+1}B_{1}\\
&=B_{1}-A_{1}\left[\Delta_{1,s}+\Delta_{1,s}A_{s}\Delta_{s+1,1}B_{s}\Delta_{1,s}\right]B_{1}\\
&=B_{s+1}-A_{s+1}\Delta_{s+1,1}B_{s+1}=B_{1}\Delta_{s+1,1}B_{s+1},
\end{align*}
which completes the proof of the first part.

Now suppose that \eqref{tran2} is true for $j=s$ and any $i$. In particular,
\begin{align*}
\Delta_{i,s+1}
&=(A_i+B_s-A_s\Delta_{s,1}B_s)^{-1}\\
&=\Delta_{i,s}+\Delta_{i,s}A_{s}(A_s+B_1-B_s\Delta_{i,s}A_s)^{-1}B_{s}\Delta_{i,s}\\
&=\Delta_{i,s}+\Delta_{i,s}A_{s}\Delta_{i+s,1}B_{s}\Delta_{i,s},\\
\Delta_{i+s,1} &= (A_s-B_s\Delta_{i,s}A_s +B_1)^{-1}\\
&=\Delta_{s,1}+\Delta_{s,1}B_{s}(A_i+B_s-A_s\Delta_{s,1}B_s)^{-1}A_{s}\Delta_{s,1}\\
&=\Delta_{s,1}+\Delta_{s,1}B_s\Delta_{i,s+1}A_s\Delta_{s,1}.
\end{align*}
This implies
 \begin{align*}
A_{i+(s+1)}&=A_{(i+s)+1}=A_{1}-B_{1}\Delta_{i+s,1}A_{1}\\
&=A_1-B_{1}\left[\Delta_{s,1}+\Delta_{s,1}B_s\Delta_{i,s+1}A_s
\Delta_{s,1}
\right]A_{1}\\
&=A_{s+1}-B_{s+1}\Delta_{i,s+1}A_{s+1}=A_{i}\Delta_{i,s+1}A_{s+1},\\
B_{i+(s+1)}&=B_{(i+s)+1}=B_{i+s}-A_{i+s}\Delta_{i+s,1}B_{i+s}\\
&=B_{i}-A_{i}\left[\Delta_{i,s}+\Delta_{i,s}A_{s}\Delta_{i+s,1}B_{s}\Delta_{i,s}\right]B_{i}\\
&=B_{i}-A_{i}\Delta_{i,s+1}B_{i}=B_{s+1}\Delta_{i,s+1}B_{i},
\end{align*}
which completes the proof of the theorem.
\end{proof}

{Two things are required to be noted. First,
Theorem~\ref{thm:trans1} implies that the iterative sequence $\{A_k-\lambda B_k\}$ can be formulated explicitly from any two matrix pencils
$A_i-\lambda B_i$ and $A_j-\lambda B_j$, where $i+j = k$. The formula also gives rise to {
a discrete-type flow} and can be used to accelerate the iterations given in Algorithm~\ref{abalg}.
Second, it follows} from  Corollary~\ref{cor:dif} and Theorem~\ref{thm:trans1}  that $A_k=A_{k-1}\Delta_{1,k-1} A_{1}=A_{1}\Delta_{1,k-1} A_{k-1}=A_{k-1}\Delta_{k-1,1} A_{1}=A_{1}\Delta_{k-1,1} A_{k-1}$. It shows that the iterations $A_k$ and $B_k$, regardless of the assumptions~\eqref{signal2}, have the following four equivalent forms by using the same initial matrix pencil:
{
\begin{equation*}
\begin{array}{|c|c|}
\hline
\multirow{2}{*}{1.} & A_k^{(1)}=A_1^{(1)}(A_1^{(1)}+B_{k-1}^{(1)})^{-1}A_{k-1}^{(1)},\\
& B_k^{(1)}=B_{k-1}^{(1)}(A_1^{(1)}+B_{k-1}^{(1)})^{-1}B_1^{(1)};
  \\
\hline
\multirow{2}{*}{2.} & A_k^{(2)}=A_1^{(2)}(B_1^{(2)}+A_{k-1}^{(2)})^{-1}A_{k-1}^{(2)},\,\\
& B_k^{(2)}=B_{k-1}^{(2)}(A_1^{(2)}+B_{k-1}^{(2)})^{-1}B_1^{(2)};
  \\
\hline
\multirow{2}{*}{3.} & A_k^{(3)}=A_1^{(3)}(A_1^{(3)}+B_{k-1}^{(3)})^{-1}A_{k-1}^{(3)},\\
& B_k^{(3)}=B_{k-1}^{(3)}(B_1^{(3)}+A_{k-1}^{(3)})^{-1}B_1^{(3)};
  \\
\hline
\multirow{2}{*}{4.} & A_k^{(4)}=A_1^{(4)}(B_1^{(4)}+A_{k-1}^{(4)})^{-1}A_{k-1}^{(4)},\\
& B_k^{(4)}=B_{k-1}^{(4)}(B_1^{(4)}+A_{k-1}^{(4)})^{-1}B_1^{(4)}.\\
\hline
\end{array}
\end{equation*}
}

The next theorem is to know how the eigeninformation is transferred during the iterative process.
\begin{Theorem}\label{thm:trans2}
Let $A_1-\lambda B_1$ be a regular matrix pencil, and let
$\{A_k-\lambda B_k\}$ be the sequence of matrices generated by Algorithm~\ref{abalg}. Suppose that
the condition~\eqref{cond} holds and
$A_1U=B_1U\Lambda$.
Then,
\begin{itemize}
\item [\emph{(a)}] $A_1 U= B_k U \sum\limits_{j=1}^k \Lambda^j$.
\item [\emph{(b)}] $A_k U= B_k U \Lambda^k$. In particular, if $1\not\in\lambda(\Lambda)$, then
\begin{equation}\label{eq:AB1}
 A_k U = (B_1-A_1)U\Lambda^k(I_n-\Lambda^k)^{-1}.
\end{equation}

\item [\emph{(c)}] $A_i U \sum\limits_{j=1}^i \Lambda^j= B_k U \Lambda^i \sum\limits_{j=1}^k \Lambda^j$, for any two positive integers $i$ and $k$.
\end{itemize}
\end{Theorem}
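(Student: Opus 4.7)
The plan is to establish (b) first as a direct consequence of the AB-algorithm construction, derive (a) as an induction using (b) and Corollary~\ref{cor:dif}, and finally obtain (c) as a purely algebraic consequence of (a) and (b).

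For (b), I would induct on $k$, essentially running the calculation already sketched in~\eqref{eq:eig1}. The base case is the hypothesis. For the step, both $A_{k+1}U$ and $B_{k+1}U\Lambda^{k+1}$ reduce to $B_k\Delta_{1,k}A_1U\Lambda^k$ via the defining recursion \eqref{eq1} together with the commutation $A_1(A_1+B_k)^{-1}B_k = B_k(A_1+B_k)^{-1}A_1$ (both equal $B_k-B_k\Delta_{1,k}B_k$ after substituting $A_1=(A_1+B_k)-B_k$). For the ``in particular'' formula, subtract $B_kU$ from $A_kU = B_kU\Lambda^k$ and use Corollary~\ref{cor:dif} to rewrite the left side as $(A_1-B_1)U$, obtaining $B_kU(\Lambda^k-I_n) = -(B_1-A_1)U$. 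The matrix $\Lambda^k-I_n$ is invertible because \eqref{cond} forbids every primitive $p$th root of unity with $p\ge 2$ from belonging to $\lambda(\Lambda)\subseteq\lambda(A_1,B_1)$, while $1\notin\lambda(\Lambda)$ is assumed; solving for $B_kU$ and multiplying by $\Lambda^k$, which commutes with $(I_n-\Lambda^k)^{-1}$, yields the advertised form.

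For (a), the cleanest route is to prove the equivalent statement $B_kUT_k = B_1U$ where $T_k:=\sum_{j=0}^{k-1}\Lambda^j$, and then recover (a) by multiplying on the right by $\Lambda$ and using $B_1U\Lambda = A_1U$ together with $T_k\Lambda = \sum_{j=1}^k\Lambda^j = S_k$. The base case $k=1$ is trivial, and the inductive step turns on the identity
\begin{equation*}
(A_1+B_k)UT_k = A_1UT_k + B_kUT_k = B_1U\Lambda T_k + B_1U = B_1UT_{k+1},
\end{equation*}
where the first rewriting uses $A_1U=B_1U\Lambda$, the second uses the inductive hypothesis $B_kUT_k=B_1U$, and the last uses $T_{k+1} = I_n+\Lambda T_k$. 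Applying $B_k\Delta_{1,k}$ from the left and recognizing $B_{k+1}=B_k\Delta_{1,k}B_1$ then closes the induction.

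Part (c) is then pure algebra. From (b) applied at index $i$, $A_iUS_i = B_iU\Lambda^iS_i = B_iUS_i\Lambda^i$, since $\Lambda^i$ commutes with $S_i$ inside the polynomial algebra generated by $\Lambda$. Invoking (a) with $k=i$ yields $B_iUS_i = A_1U$, hence $A_iUS_i = A_1U\Lambda^i$, and applying (a) once more in its original form produces $A_1U = B_kUS_k$, so that $A_iUS_i = B_kUS_k\Lambda^i = B_kU\Lambda^iS_k$. The main obstacle is identifying the auxiliary quantity $T_k$ in (a); once chosen, everything reduces to bookkeeping with polynomials in $\Lambda$ acting on the right of $U$.
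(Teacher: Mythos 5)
Your argument is correct, and for parts (a) and (b) it is essentially the paper's: (b) is the computation already recorded in~\eqref{eq:eig1} (which the paper simply cites), and your induction for (a) via $T_k=\sum_{j=0}^{k-1}\Lambda^j$ with the step ``apply $B_k\Delta_{1,k}$ to $(A_1+B_k)UT_k=B_1UT_{k+1}$'' is the same algebra as the paper's induction on $B_{s+1}=B_s\Delta_{1,s}B_1$, merely reorganized; your handling of the ``in particular'' clause ($B_kU(\Lambda^k-I)=(A_1-B_1)U$ via Corollary~\ref{cor:dif}) is an equivalent rearrangement of the paper's $A_kU(I-\Lambda^k)=(B_1-A_1)U\Lambda^k$, and you are in fact more explicit than the paper about why $I-\Lambda^k$ is invertible (condition~\eqref{cond} excludes the nontrivial $k$th roots of unity, $1\notin\lambda(\Lambda)$ excludes the trivial one — this does tacitly use that $U$ has full column rank so that $\lambda(\Lambda)\subseteq\lambda(A_1,B_1)$, an assumption the paper also leaves implicit). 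Where you genuinely diverge is part (c): the paper first proves the auxiliary identity $A_1U\Lambda^i=A_iU\sum_{j=1}^i\Lambda^j$ by invoking the flow formulas of Theorem~\ref{thm:trans1} ($A_i=A_1-B_1\Delta_{i-1,1}A_1$, $B_i=B_1\Delta_{i-1,1}B_{i-1}$) and then runs a second induction on $i$, whereas you obtain (c) in three lines purely from (a), (b), and the fact that $\Lambda^i$ commutes with $\sum_{j=1}^k\Lambda^j$: $A_iUS_i=B_iUS_i\Lambda^i=A_1U\Lambda^i=B_kUS_k\Lambda^i=B_kU\Lambda^iS_k$. This is a real simplification — it removes the dependence of Theorem~\ref{thm:trans2}(c) on Theorem~\ref{thm:trans1} and eliminates the extra induction — at no loss of generality; the paper's route, by contrast, illustrates how the discrete-flow identity can be used but is not actually needed here.
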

\begin{proof}
%
Clearly, (a) is true for $k=1$. Suppose that the statement is true for a positive integer $k=s$; that is,
\begin{align*}
A_1 U= B_s U \sum\limits_{j=1}^s \Lambda^j.
\end{align*}
We notice that
\begin{align*}
A_1U-B_s\Delta_{1,s}A_1U &=(A_1+B_s)\Delta_{1,s}A_1U-B_s\Delta_{1,s}A_1U\\
&= B_s\Delta_{1,s}A_1U\sum\limits_{j=1}^s \Lambda^j =B_s\Delta_{1,s}B_1 U \sum\limits_{j=2}^{s+1} \Lambda^j,
\end{align*}
so that
\begin{align*}
&A_1U=B_s\Delta_{1,s}B_1U\Lambda+B_s\Delta_{1,s}B_1 U \sum\limits_{j=2}^{s+1} \Lambda^j=B_{s+1}U\sum\limits_{j=1}^{s+1} \Lambda^j.
\end{align*}

The result of the first part of (b) has been given in our introduction. We thus omit the proof here.
Since
 \begin{equation*}
  A_k U = B_k U \Lambda^k=(A_k+B_1-A_1)U \Lambda^k= A_k U \Lambda^k + (B_1-A_1) U\Lambda^k,
\end{equation*}
we see that~\eqref{eq:AB1} holds, while $1\not\in \lambda(\Lambda)$.  Here, the second equality follows from Corollary~\ref{cor:dif}.

To prove (c), we first show that for any positive integer $i$,
\begin{equation*}
A_1 U  \Lambda^i = A_i U \sum\limits_{j=1}^i \Lambda^j.
\end{equation*}
By Theorem~\ref{thm:trans1}, since
$A_i = A_1 - B_1 \Delta_{i-1,1} A_1$ and
$B_i = B_{1}  \Delta_{i-1,1} B_{i-1}$,
we have
\begin{equation*}
(A_1- A_i) U = (B_1 \Delta_{i-1,1} A_1) U =   B_1 \Delta_{i-1,1} B_{i-1} U\sum_{j=1}^{i-1}\Lambda^j=   B_i U\sum_{j=1}^{i-1}\Lambda^j.
\end{equation*}
Or, equivalently,
\begin{equation*}
A_1 U \Lambda^i = A_i U \Lambda^i+B_i U \Lambda^i  \sum_{j=1}^{i-1}\Lambda^j 
= A_i U \sum_{j=1}^{i}\Lambda^j,
\end{equation*}
since  $A_i U= B_i U  \Lambda^i$.

Second, from (a), we have already proved (c)
for $i=1$ and a given positive integer $k$.
Assume (c) is true for $i = s$; that is,
\[
A_s U \sum_{j=1}^s \Lambda^j = B_k U\Lambda^s \sum_{j=1}^k \Lambda^j.
\]
Then
\begin{align*}
A_{s+1}U\sum_{j=1}^{s+1}\Lambda^j &= A_1 U \Lambda^{s+1}= (A_s U \sum_{j=1}^s \Lambda^j) \Lambda= B_k U \Lambda^{s+1} \sum_{j=1}^{k} \Lambda^j.
\end{align*}

\end{proof}

\section{Modified  AB-Algorithm }

Let {$\{A_k-\lambda B_k\}$} be the sequence of matrices generated by Algorithm~\ref{abalg}. {Before we move on, we should emphasize that the structure of the matrix pencil $A_k-\lambda B_k$ is invariant once the subscripts $i+j = k$; that is, the generation of the sequence $\{A_k-\lambda B_k\}$ is independent of  the subscript in $A_i$, $A_j$, $B_i$ and $B_j$. To fully take advantages of this invariance, we would like to design algorithms by applying Theorem~\ref{thm:trans1} to generate accelerated iterations with convergence of any desired order as follows.}

%
%
%
%

{
\begin{Algorithm}\label{aa2}
{\emph{(Modified AB-Algorithm)}}
\begin{enumerate}

\item {Given a positive integer $r>1$,
a tolerance  $\tau>0$, and a positive integer $kmax$, let $(\widehat{A}_1,\widehat{B}_1)=(A_1,B_1)$;}

\item    {For} $k= 2,\ldots,$ iterate {until
{$\mbox{dist}(\mbox{Null}(\widehat{A}_{k-1}),\mbox{Null}(\widehat{A}_{k}))< \tau$}  or $k> kmax$.}

 \begin{align*}
\widehat{A}_{k}& =A_{k-1}^{(r-1)}(A_{k-1}^{(r-1)}+\widehat{B}_{k-1})^{-1}\widehat{A}_{k-1},\\
\widehat{B}_{k}& =\widehat{B}_{k-1}(A_{k-1}^{(r-1)}+\widehat{B}_{k-1})^{-1}B_{k-1}^{(r-1)},
\end{align*}
    until convergence, where $(A_{k-1}^{(r-1)},B_{k-1}^{(r-1)})$ is defined in step 3.
\item
     {For} $\ell=1,\ldots,r-2$, iterate
   \begin{align*}
A_{k-1}^{(\ell+1)}& =A_{k-1}^{(\ell)}(A_{k-1}^{(\ell)}+\widehat{B}_{k-1})^{-1}\widehat{A}_{k-1},\\
B_{k-1}^{(\ell+1)}& =\widehat{B}_{k-1}(A_{k-1}^{(\ell)}+\widehat{B}_{k-1})^{-1}B_{k-1}^{(\ell)},
\end{align*}
with $(A_{k-1}^{(1)},B_{k-1}^{(1)})=(\widehat{A}_{k-1},\widehat{B}_{k-1})$.
 \end{enumerate}
\end{Algorithm}
}
 For clarity, a thing should be emphasized here. The AB algorithm has been developed to obtain the stable deflating subspace of the generalized eigenvalue problem $A_1U =  B_1 U\Lambda$. However,  {the sequence $\{A_k U\}$ provided in Algorithm~\ref{abalg}  converges only r-linearly to $0$, once the spectral radius of $\Lambda$ is less than $1$, and the sequence $\{B_k\}$  is uniformly bounded.}
From Algorithm~\ref{aa2} it follows that
\begin{subequations}
\begin{eqnarray}\label{eq:conv}
&&{A}_{k-1}^{(\ell+1)}  U = {B}_{k-1}^{(\ell+1)} U \Lambda^{(\ell+1)r^{k-2}},\\
&&\widehat{A}_{k} U =\widehat{B}_{k} U \Lambda^{r^{k-1}},
\end{eqnarray}
\end{subequations}
for $k = 2,\ldots$, and $\ell = 1,\ldots, r-2$,
and  $(\widehat{A}_{k},\widehat{B}_{k}) =
(A_{r^{k-1}},B_{r^{k-1}})$.
It follows from Theorem~\ref{thm:trans2} that
\begin{align*}
\|\widehat{A}_{k} U\|\leq
\dfrac{\|(B_1-A_1)U\|}{1-\|\Lambda\|^{r^{k-1}}}
\|\Lambda\|^{r^{k-1}},
\end{align*}
where $\|.\|$ is a matrix induced norm such that $\|\Lambda\|<1$. Thus the sequence $\{\widehat{A}_{k} U\}$ converges to $0$ with r-order $r$. For a full account of the definition of the rate of convergence, the reader is referred to~\cite{Kelly1995}.

Note that given two initial $n\times n$ matrices $A_1$ and $B_1$,
 the overall cost for computing the modified  AB-algorithm per iteration is $\frac{14(r-1)}{3} n^3$ flops.
The computation cost of the modified AB-algorithm with positive integer $r$
definitely increases as $r$ increases. Theoretically, Algorithm~\ref{aa2} provide a $r$-order convergence
sequence which approximates the solution of the stable subspace of $A-\lambda B$. Numerically, if $\rho(\Lambda)$ is not sufficiently close to $1$, choosing $r=2$ will be fast enough.

\section{Application of the AB-Algorithm for Solving the Matrix Square Root}
 { We notice that only recently, the modified AB-Algorithm with $r=2$ have been adjusted specifically for solving a kind of Sylvester matrix equations~\cite{Lin20152171} and the palindromic generalized eigenvalue problem~\cite{LI20112269}. In this section, we show that the AB-algorithm provide an alternative way to
compute the matrix square root. In particular, the speed of convergence of the AB-algorithm can be of any desired order. As mentioned before, numerical methods for solving the matrix square root are numerous. Comparison of numerical performance among different methods is something worthy of our investigation and is in process.}
In~\eqref{eq:conv} we see that the sequence $\{\widehat{A}_{i} U\}$ converges with r-order $r$ to $0$. We then in this section use this accelerated techniques  to solve the quadratic matrix equation defined in~\eqref{eq:quad},
i.e., find the principle square root $\sqrt{S}$ of the matrix $S$ with $\lambda(S) \subset \mathbb{C}^+$. To this end, we relate~\eqref{eq:quad} to the generalized eigenvalue problem
{
 \begin{align}\label{eq:space}
 A \bb I_{n} \\ \sqrt{S} \eb= B \bb I_{n} \\ \sqrt{S} \eb \sqrt{S},
\end{align}
}
where $A=\bb 0 & I_{n} \\ S & 0 \eb$ and $B=I_{2n}$. Since $\lambda(S) \subseteq \mathbb{C}^+$, there is no guarantee that the AB-algorithm will converge. To remedy this situation, this matrix $\sqrt{S}$ in~\eqref{eq:space} must be retreated. One way is to apply the M\"{o}bius transformation
{
\begin{equation*}
\mathcal{C}_{\sqrt{S}}(\gamma I_n) = (\gamma I_n-\sqrt{S})(\gamma I_n+\sqrt{S})^{-1},
\end{equation*}
}
where $\gamma > 0$ and {$-1\not\in\lambda(\gamma I_n-\lambda \sqrt{S})$}, i.e., $-1$ is not an eigenvalue of the matrix pencil {$\gamma I_n-\lambda\sqrt{S}$}; that is, recast~\eqref{eq:space} in the following equation
\begin{equation}\label{eq:genAB}
A_1\bb I_{n} \\ \sqrt{S} \eb=B_1\bb I_{n} \\ \sqrt{S} \eb \mathcal{C}_{\sqrt{S}}(\gamma I_n) ,
\end{equation}
where $A_1=\gamma B-A$ and $B_1=\gamma B+A$. Observe that $\rho(\mathcal{C}_{\sqrt{S}}(\gamma I_n)) < 1$ since $\lambda(S)\subseteq \mathbb{C}^+$. Upon using the AB-algorithm, it can be easily checked that
for any integer $k\geq1$, $A_k$ and $B_k$ can be expressed as
\begin{equation}\label{eq:abk}
A_k=\bb Q_k & -I_n\\ -S & Q_k \eb,\,B_k=\bb Q_k & I_n\\ S & Q_k \eb,\end{equation}
respectively, where the sequence $\{Q_k\}$ satisfies
 $Q_i Q_j=Q_j Q_i$, for any integers $i, j > 0$, and
 the following iteration
\begin{align}\label{fix}
Q_{k+1}=(\gamma Q_k+S)(\gamma I_n+Q_k)^{-1}
\end{align}
with $Q_1=\gamma I_n$. Note that once $Q_k = \sqrt{S}$
for some $k$,  it follows that $Q_\ell = \sqrt{S}$ for all $\ell \geq k$.

Specifically, let $\mathcal{C}_\gamma(\lambda)=\frac{\gamma-\lambda}{\lambda+\gamma}$ be the M\"{o}bius transformation with a parameter $\gamma \neq 0$ and $\lambda \neq -\gamma$. Then, the inverse scalar M\"{o}bius transformation can be written as
\begin{equation*}
\mathcal{C}_\gamma^{-1}(\lambda)=\gamma\frac{1-\lambda}{1+\lambda},\quad \lambda\neq -1.
\end{equation*}
Let $\lambda=e^{\frac{2j \pi i}{n}}\in S_n \backslash \{-1\}$, where $1\leq j < n$.
It follows that the real part of the square of $a:=\mathcal{C}_\gamma^{-1}(\lambda)$ is a real negative number, since
\begin{equation}\label{eq:reala}
a^2 = \gamma^2 (\frac{1-e^{\frac{2j\pi i}{n}}}{1+e^{\frac{2j\pi i}{n}}})^2
=
-\gamma^2\tan^2(\frac{j\pi}{n})<0.
\end{equation}

From~\eqref{eq:reala} and Theorem~\eqref{thm:well-defined}, it follows that the AB-algorithm will terminate prematurely only if
$\lambda(S)\subseteq\mathbb{C}^-$; that is,  once $\lambda(S) \subseteq \mathbb{C}^+
$, or even, $\lambda(S) \subseteq \mathbb{C}^+ \cup\{0\}$,
the sequence of matrix pencils $\{A_k-\lambda B_k\}$, initiated by~\eqref{eq:genAB}, is well-defined.

With an eye on the structure of the matrix pencil $A_k-\lambda B_k$, we look for
an accelerated iteration induced by the assumption of $\widehat{A}_{k}-\lambda
\widehat{B}_{k}$ in Algorithm~\ref{aa2}.

{
\begin{Algorithm} \label{square}
{\emph{(Iteration for solving the matrix square root)}}
\begin{enumerate}

\item
 {Given a positive integer $r>1$,
a tolerance  $\tau>0$,
 and a positive integer $kmax$,}
let $\widehat{Q}_1={Q}_1=\gamma I_n$;
\item    {For} $i=2,\ldots$, {iterate} {until
{$\|\widehat{Q}_{k}-\sqrt{S}\|< \tau$}  or $k>kmax$.}
 \begin{align*}
\widehat{Q}_{k}&:={(S+\widehat{Q}_{k-1}{Q}_{k-1}^{(r-1)})}{(\widehat{Q}_{k-1}+{Q}_{k-1}^{(r-1)})^{-1}},
\end{align*}
    until convergence, where $\widehat{Q}_{k-1}^{(r-1)}$ is defined in step 3.
\item
     {For} $\ell=1,\cdots,r-2$, iterate
 \begin{align*}
{Q}_{k-1}^{(\ell+1)}&:={(S+\widehat{Q}_{k-1}{Q}_{k-1}^{(\ell)})}{(\widehat{Q}_{k-1}+{Q}_{k-1}^{(\ell)})^{-1}},
\end{align*}
with $\widehat{Q}_{k-1}^{(1)}=\widehat{Q}_{k-1}$.
 \end{enumerate}
\end{Algorithm}

}

Note that $\widehat{Q}_{k} = Q_{r^{k-1}}$ for  $k \geq 1$, and with the assumption of the existence of iterative sequences, we immediately have the following iterative formulae. We omit the proof here because the result can be straightforwardly shown by using induction.
\begin{Theorem}
Assume that the sequences generated by Algorithm~\ref{square} can be constructed with no break down. Then, we have the following two iterative formulae.
\begin{enumerate}
  \item When $r$ is even, let $q = \frac{r}{2}$. We have
  \begin{align}\label{eq:whatQ1}
  \widehat{Q}_{k+1}=(\sum\limits_{j=0}^{q} {r \choose 2j}\widehat{Q}_k^{r-2j} S^j)(\sum\limits_{j=0}^{q-1} {r \choose 2j+1}\widehat{Q}_k^{r-2j-1} S^{j})^{-1}.
  \end{align}
   \item While $r$ is odd, let $q = \frac{r-1}{2}$. We have
  \begin{align}\label{eq:whatQ2}
  \widehat{Q}_{k+1}=(\sum\limits_{j=0}^{q} {r \choose 2j}\widehat{Q}_k^{r-2j} S^j)(\sum\limits_{j=0}^{q} {r \choose 2j+1}\widehat{Q}_k^{r-2j-1} S^{j})^{-1},
  \end{align}
\end{enumerate}
{
where the notation ${n \choose k}$ denotes the number of $k$-combinations
from the set $S=\{1,2,\cdots,n\}$ of $n$ elements.}
\end{Theorem}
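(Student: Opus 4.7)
\emph{Plan.} Fix $A := \widehat{Q}_{k-1}$. One outer step of Algorithm~\ref{square} applies the rational map $\phi(X) := (S + AX)(A + X)^{-1}$ a total of $r$ times: the inner loop produces $Q^{(\ell+1)}_{k-1} = \phi(Q^{(\ell)}_{k-1})$ for $\ell=1,\dots,r-2$ starting from $Q^{(1)}_{k-1}=A$, and the outer update is $\widehat{Q}_k = \phi(Q^{(r-1)}_{k-1})$. I would prove both (\ref{eq:whatQ1}) and (\ref{eq:whatQ2}) uniformly by showing, by induction on $\ell$, that each intermediate iterate has the closed form $Q^{(\ell)}_{k-1} = M_\ell(A)\,N_\ell(A)^{-1}$ for two explicit polynomials $M_\ell, N_\ell$ in $A$ and $S$; then taking $\ell = r$ gives $\widehat{Q}_k = M_r(A)\,N_r(A)^{-1}$, and a binomial expansion splits into the two parity cases claimed.

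\emph{Setting up the invariants.} Define
\[
M_\ell(X) := \tfrac{1}{2}\bigl[(X+\sqrt{S})^\ell + (X-\sqrt{S})^\ell\bigr], \qquad N_\ell(X) := \tfrac{1}{2\sqrt{S}}\bigl[(X+\sqrt{S})^\ell - (X-\sqrt{S})^\ell\bigr].
\]
Although $\sqrt{S}$ is used symbolically, the binomial theorem shows that $M_\ell$ and $N_\ell$ are genuine polynomials in $X$ and $S$: the odd powers of $\sqrt{S}$ cancel in $M_\ell$, and one overall $\sqrt{S}$ factors out of $N_\ell$. Multiplying out $(X\pm\sqrt{S})^{\ell+1} = (X\pm\sqrt{S})^\ell(X\pm\sqrt{S})$ yields the two-term recurrences
\[
M_{\ell+1}(X) = X\,M_\ell(X) + S\,N_\ell(X), \qquad N_{\ell+1}(X) = X\,N_\ell(X) + M_\ell(X),
\]
whenever $X$ commutes with $S$. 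The required commutativity is inherited from $\widehat{Q}_1 = \gamma I_n$ and the fact that every iterate generated by (\ref{fix}) is a rational function of $S$ alone, so the commuting relation already recorded for the basic sequence extends to every inner iterate $Q^{(\ell)}_{k-1}$.

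\emph{Induction.} The base case $\ell=1$ gives $M_1(A) = A$ and $N_1(A) = I_n$, so $M_1(A)N_1(A)^{-1}=A=Q^{(1)}_{k-1}$. For the induction step, substituting $Q^{(\ell)}_{k-1} = M_\ell(A)\,N_\ell(A)^{-1}$ into the definition of $\phi$ and clearing the common right factor $N_\ell(A)^{-1}$ from both numerator and denominator gives
\[
\phi\bigl(Q^{(\ell)}_{k-1}\bigr) = \bigl(S\,N_\ell(A) + A\,M_\ell(A)\bigr)\bigl(A\,N_\ell(A) + M_\ell(A)\bigr)^{-1} = M_{\ell+1}(A)\,N_{\ell+1}(A)^{-1},
\]
by the recurrences above. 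Iterating $r-1$ times in the inner loop and once more in the outer step produces $\widehat{Q}_k = M_r(A)\,N_r(A)^{-1}$.

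\emph{Conclusion and main obstacle.} Expanding directly,
\[
M_r(A) = \sum_{j=0}^{\lfloor r/2\rfloor}\binom{r}{2j}A^{r-2j}S^j, \qquad N_r(A) = \sum_{j=0}^{\lfloor (r-1)/2\rfloor}\binom{r}{2j+1}A^{r-2j-1}S^j,
\]
which is exactly (\ref{eq:whatQ1}) when $r=2q$ and (\ref{eq:whatQ2}) when $r=2q+1$. The only delicate point is the bookkeeping of $\sqrt{S}$ as a formal symbol in the definitions of $M_\ell$ and $N_\ell$; once one observes that the two combinations used are polynomial in $S$ alone, and that commutativity with $S$ propagates through the iteration, the argument is a short induction driven by the two recurrences. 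Hence the main obstacle is purely cosmetic rather than substantive.
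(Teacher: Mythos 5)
Your proposal is correct and matches the approach the paper has in mind: the paper omits the argument as a "straightforward" induction and immediately records the very same closed forms $V_m=\tfrac12\bigl((\widehat{Q}_k+\sqrt{S})^m+(\widehat{Q}_k-\sqrt{S})^m\bigr)$, $U_m=\tfrac{(\sqrt{S})^{-1}}{2}\bigl((\widehat{Q}_k+\sqrt{S})^m-(\widehat{Q}_k-\sqrt{S})^m\bigr)$ that you use as the induction invariant, with commutativity of the iterates with $S$ and the no-breakdown hypothesis (which also guarantees invertibility of each $N_\ell(A)$) supplying the needed justification. One cosmetic slip: the map $\phi$ is applied $r-1$ times per outer step (the inner loop runs $r-2$ times), not $r$ times as stated in your narration, but since the indexing starts at $Q^{(1)}_{k-1}=M_1(A)N_1(A)^{-1}$ the final index is indeed $r$ and your conclusion $\widehat{Q}_k=M_r(A)N_r(A)^{-1}$ is unaffected.
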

We notice that if $S$ is a nonsingular matrix, then~\eqref{eq:whatQ1} and~\eqref{eq:whatQ2} can be simply expressed by the following rule:
 \begin{align*}
  \widehat{Q}_{k+1}=V_m U_m^{-1},
  \end{align*}
where
{
\begin{eqnarray*}
&&V_m  =\sum\limits_{j=0}^{[\frac{m}{2}]} {m \choose 2j}\widehat{Q}_k^{m-2j} S^j=\dfrac{1}{2}((\widehat{Q}_k+\sqrt{S})^m+(\widehat{Q}_k-\sqrt{S})^m), \\ &&U_m=\sum\limits_{j=0}^{[\frac{m-1}{2}]} {m \choose 2j+1}\widehat{Q}_k^{m-2j-1} S^j=\dfrac{(
{\sqrt{S})^{-1}}}{2}((\widehat{Q}_k+\sqrt{S})^m-(\widehat{Q}_k-\sqrt{S})^m).
\end{eqnarray*}}

Importantly, under nonsingularity assumption,  a strong result related to the sequences
$\{ \mathcal{C}_{\sqrt{S}}(Q_i) \}$ and $\{ \mathcal{C}_{\sqrt{S}}(\widehat{Q}_i) \}$
hold.
{
\begin{Lemma}\label{lem:qk}
Suppose that $S$ is nonsingular. Let $i$, $j$, and $k$ be any positive integers, and $1\leq i,j \leq k$. Then the following properties hold.
\begin{enumerate}
\item
For the sequence~$\{{Q}_k\}$, we have
\par\noindent
\begin{itemize}
\item[a.]
${Q}_{k}=\sqrt{S}(I_n+\mathcal{C}_{\sqrt{S}}({Q}_{1})^{{k}})(I_n-\mathcal{C}_{\sqrt{S}}({Q}_{1})^{{k}})^{-1}$,
\item[b.]
{$\mathcal{C}_{\sqrt{S}}({Q}_{i})^{j}=\mathcal{C}_{\sqrt{S}}({Q}_{j})^{i}.$}
\end{itemize}
\item
For the sequence~$\{\widehat{Q}_k\}$, we have
\par\noindent
\begin{itemize}
\item[a.]
$\widehat{Q}_{k} =\sqrt{S}(I_n+\mathcal{C}_{\sqrt{S}}(\widehat{Q}_{1})^{r^{k-1}})(I_n-\mathcal{C}_{\sqrt{S}}(\widehat{Q}_{1})^{r^{k-1}})^{-1}$,
\item[b.]
{$\mathcal{C}_{\sqrt{S}}(\widehat{Q}_{i})^{r^{k-i}}=\mathcal{C}_{\sqrt{S}}(\widehat{Q}_{j})^{r^{k-j}}$.}
\end{itemize}
%
\end{enumerate}
\end{Lemma}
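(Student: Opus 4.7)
The plan is to reduce everything to a multiplicative semigroup law for the Möbius-transformed sequence $\{\mathcal{C}_{\sqrt{S}}(Q_k)\}$ and then invert to recover $Q_k$. Throughout, nonsingularity of $S$ guarantees the existence of $\sqrt{S}$, and the no-breakdown hypothesis ensures that all the relevant inverses ($\gamma I_n + Q_k$, $Q_k + \sqrt{S}$, $I_n - \mathcal{C}_{\sqrt{S}}(Q_1)^k$, etc.) exist at each step. A preliminary step is to verify, by induction on $k$, that every $Q_k$ commutes with $\sqrt{S}$: the base case $Q_1 = \gamma I_n$ is trivial, and the recursion \eqref{fix} expresses $Q_{k+1}$ as a rational function of $Q_k$ and $S$, which lies in the commutative algebra generated by $Q_k$ and $\sqrt{S}$.

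The heart of the argument is the identity
\begin{equation*}
\mathcal{C}_{\sqrt{S}}(Q_{k+1}) = \mathcal{C}_{\sqrt{S}}(Q_1)\,\mathcal{C}_{\sqrt{S}}(Q_k),
\end{equation*}
which I would obtain from the two factorizations
\begin{align*}
(\gamma Q_k + S) - \sqrt{S}(\gamma I_n + Q_k) &= (\gamma I_n - \sqrt{S})(Q_k - \sqrt{S}), \\
(\gamma Q_k + S) + \sqrt{S}(\gamma I_n + Q_k) &= (\gamma I_n + \sqrt{S})(Q_k + \sqrt{S}),
\end{align*}
each verified by direct expansion with no commutativity required. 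Right-multiplying both by $(\gamma I_n + Q_k)^{-1}$ produces $Q_{k+1} \mp \sqrt{S}$ on the left, and forming the quotient $(Q_{k+1}-\sqrt{S})(Q_{k+1}+\sqrt{S})^{-1}$ cancels the middle factor $(\gamma I_n + Q_k)^{\pm 1}$; the previously established commutativity of $\sqrt{S}$ with $Q_k$ then lets me move $(\gamma I_n + \sqrt{S})^{-1}$ past $(Q_k - \sqrt{S})(Q_k + \sqrt{S})^{-1}$ to obtain the product above. A routine induction then yields $\mathcal{C}_{\sqrt{S}}(Q_k) = \mathcal{C}_{\sqrt{S}}(Q_1)^k$.

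Part 1a now follows by inverting the Möbius transformation: setting $P = (Q_k - \sqrt{S})(Q_k + \sqrt{S})^{-1}$ and using the commutativity of $Q_k$ with $\sqrt{S}$, one solves $Q_k = \sqrt{S}(I_n + P)(I_n - P)^{-1}$, and substituting $P = \mathcal{C}_{\sqrt{S}}(Q_1)^k$ gives the asserted formula. Part 1b is immediate, since $\mathcal{C}_{\sqrt{S}}(Q_i)^j = \bigl(\mathcal{C}_{\sqrt{S}}(Q_1)^i\bigr)^j = \mathcal{C}_{\sqrt{S}}(Q_1)^{ij}$ is symmetric in $i$ and $j$. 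Part 2 follows from part 1 together with the previously noted identity $\widehat{Q}_k = Q_{r^{k-1}}$: formula 2a is just 1a with $k$ replaced by $r^{k-1}$ (using $\widehat{Q}_1 = Q_1$), and 2b reduces to checking $\mathcal{C}_{\sqrt{S}}(Q_1)^{r^{i-1}\cdot r^{k-i}} = \mathcal{C}_{\sqrt{S}}(Q_1)^{r^{j-1}\cdot r^{k-j}}$, both exponents collapsing to $r^{k-1}$.

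The main obstacle is the commutativity step: without knowing a priori that $Q_k$ and $\sqrt{S}$ commute, the desired collapse of $\mathcal{C}_{\sqrt{S}}(Q_{k+1})$ into a product of two independent Möbius factors fails in general. Once the commutativity is secured inductively, the rest of the proof is straightforward rearrangement of rational expressions inside a commutative matrix algebra.
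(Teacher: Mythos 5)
Your proof is correct, but it takes a genuinely different route from the paper. The paper obtains part 1 by specializing its general pencil machinery: it applies Theorem~\ref{thm:trans2}(b) to the pencil \eqref{eq:genAB} and uses the block form \eqref{eq:abk} of $A_k,B_k$ to read off $({Q}_k-\sqrt{S})(I_n-\mathcal{C}_{\sqrt{S}}({Q}_1)^{k})=2\sqrt{S}\,\mathcal{C}_{\sqrt{S}}({Q}_1)^{k}$, from which 1a and the identity $\mathcal{C}_{\sqrt{S}}(Q_k)=\mathcal{C}_{\sqrt{S}}(Q_1)^k$ (hence 1b) follow, and it gets part 2 by the same argument applied to $\widehat{A}_k U=\widehat{B}_k U\,\mathcal{C}_{\sqrt{S}}(\widehat{Q}_1)^{r^{k-1}}$. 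You instead work entirely at the level of the scalar-type recursion \eqref{fix}: an inductive commutativity argument plus the two exact factorizations $Q_{k+1}\mp\sqrt{S}=(\gamma I_n\mp\sqrt{S})(Q_k\mp\sqrt{S})(\gamma I_n+Q_k)^{-1}$ give the semigroup law $\mathcal{C}_{\sqrt{S}}(Q_{k+1})=\mathcal{C}_{\sqrt{S}}(Q_1)\mathcal{C}_{\sqrt{S}}(Q_k)$, and everything else is Möbius inversion in a commutative algebra. Your route is self-contained and more elementary (it never invokes Theorem~\ref{thm:trans2} or the $2n\times 2n$ pencil structure, and as a bonus your second factorization shows inductively that $Q_k+\sqrt{S}$ is invertible), while the paper's route exhibits the lemma as a direct corollary of the discrete-flow theory and transfers to $\{\widehat{Q}_k\}$ without extra work. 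One small correction: the invertibility of $I_n-\mathcal{C}_{\sqrt{S}}(Q_1)^{k}$ is not a consequence of the no-breakdown hypothesis on $\gamma I_n+Q_k$; as in the paper, it comes from the spectral hypotheses on $S$ (nonsingularity excludes the eigenvalue $1$, and the absence of nonpositive real eigenvalues---indeed $\lambda(S)\subset\mathbb{C}^+$, giving $\rho(\mathcal{C}_{\sqrt{S}}(\gamma I_n))<1$---excludes the other roots of unity, cf.\ \eqref{eq:reala}), so you should cite that rather than breakdown-freeness.
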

}
\begin{proof}
It follows from Theorem~\ref{thm:trans2} and $Q_1=\gamma I_n$ that
\begin{align}\label{eq:abk2}
{
{A}_k U(I_n-\mathcal{C}_{\sqrt{S}}({Q}_1)^{k})=(B_1-A_1)U\mathcal{C}_{\sqrt{S}}({Q}_1)^{k}.}
\end{align}
Then,~\eqref{eq:abk} and~\eqref{eq:abk2} yield
\begin{align}\label{eq:qk}
({Q}_k-\sqrt{S})(I_n-\mathcal{C}_{\sqrt{S}}({Q}_1)^{k})&=2 \sqrt{S} \mathcal{C}_{\sqrt{S}}({Q}_1)^{k}.
\end{align}
By adding $2 \sqrt{S}(I_n-\mathcal{C}_{\sqrt{S}}({Q}_1)^{k})$ to both sides of~\eqref{eq:qk}, we have
\begin{align}\label{eq:qk2}
({Q}_k+\sqrt{S})(I_n-\mathcal{C}_{\sqrt{S}}({Q}_1)^{k})&=2 \sqrt{S}.
\end{align}
From~\eqref{eq:qk} and~\eqref{eq:qk2} together, it must be that
{
\begin{align*}
{Q}_k(I_n-\mathcal{C}_{\sqrt{S}}({Q}_1)^{k}) =\sqrt{S} (I_n +\mathcal{C}_{\sqrt{S}}({Q}_1)^{k}).
\end{align*}
}
 Since $S$ is nonsingular, it follows that $1\not\in\lambda(\mathcal{C}_{\sqrt{S}}({Q}_1))$ so that
{
\begin{align*}
{Q}_k= \sqrt{S} (I_n +\mathcal{C}_{\sqrt{S}}({Q}_1)^{k})(I_n-\mathcal{C}_{\sqrt{S}}({Q}_1)^{k}) ^{-1},
\end{align*}
}
which is equivalent to
{
\begin{align*}
\mathcal{C}_{\sqrt{S}}({Q}_1)^{k} = \mathcal{C}_{\sqrt{S}}({Q}_k).
\end{align*}
}
Since $k$ is an arbitrary positive integer, we have
{
\begin{align*}
 (\mathcal{C}_{\sqrt{S}}({Q}_i))^{j}
 = \mathcal{C}_{\sqrt{S}}({Q}_1)^{ij} =  (\mathcal{C}_{\sqrt{S}}({Q}_j))^{i},
\end{align*}
}
for $1\leq i, j \leq k$.

Also, by Theorem~\ref{thm:trans2}, Algorithm~\ref{aa2},
$\widehat{Q}_1 = \gamma I_n$, we have
\begin{align*}
\widehat{A}_k U=\widehat{B}_k U (\mathcal{C}_{\sqrt{S}}(\widehat{Q}_1))^{r^{k-1}},
\end{align*}
which then completes the proof of {
part 2a. and part 2b}. by applying the same strategies as above.
\end{proof}


{
Indeed, this iteration in Algorithm~\ref{square} converges to $\sqrt{S}$
with {
q}-order $r$.
}
\begin{Theorem}\label{thm:qua}
Suppose that $S$ is a nonsingular matrix. Let $\|.\|$ be a matrix induced norm such that $\|\mathcal{C}_{\sqrt{S}}(\widehat{Q}_1)\|<1$. Then,
{
\begin{equation*}
\|\widehat{Q}_{k+1}-\sqrt{S}\| \leq \mu \|\widehat{Q}_k-\sqrt{S}\|^r,
\end{equation*}
}
for some $\mu > 0$; that is, $\widehat{Q}_k\rightarrow \sqrt{S}$ with {
q}-order $r$.
\end{Theorem}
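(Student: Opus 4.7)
The plan is to leverage Lemma~\ref{lem:qk}(2a) to express both $\widehat{Q}_k-\sqrt{S}$ and $\widehat{Q}_{k+1}-\sqrt{S}$ in closed form in terms of a single Cayley-transformed iterate, then exploit the $r$-fold self-composition encoded in $E_{k+1}=E_k^r$ to factor out $(\widehat{Q}_k-\sqrt{S})^r$.

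First, write $E_k:=\mathcal{C}_{\sqrt{S}}(\widehat{Q}_1)^{r^{k-1}}$. By Lemma~\ref{lem:qk}(2a), $\widehat{Q}_k=\sqrt{S}(I_n+E_k)(I_n-E_k)^{-1}$, and a direct manipulation gives the pair of identities
\begin{align*}
\widehat{Q}_k-\sqrt{S}&=2\sqrt{S}\,E_k(I_n-E_k)^{-1},\\
\widehat{Q}_k+\sqrt{S}&=2\sqrt{S}\,(I_n-E_k)^{-1}.
\end{align*}
Since every $\widehat{Q}_k$ is a rational function of $S$, all the matrices $\widehat{Q}_k$, $\sqrt{S}$, and $E_k$ commute; consequently $E_k=(\widehat{Q}_k-\sqrt{S})(\widehat{Q}_k+\sqrt{S})^{-1}$ and, by the very definition of $E_k$, $E_{k+1}=E_k^r$. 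Substituting $E_{k+1}$ into the first identity applied at index $k+1$ yields
\begin{equation*}
\widehat{Q}_{k+1}-\sqrt{S}=2\sqrt{S}\,E_k^r(I_n-E_k^r)^{-1}=2\sqrt{S}\,(\widehat{Q}_k-\sqrt{S})^r(\widehat{Q}_k+\sqrt{S})^{-r}(I_n-E_{k+1})^{-1},
\end{equation*}
which isolates the $r$-th power of the error on the right-hand side.

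It now remains to bound the three factors $\sqrt{S}$, $(\widehat{Q}_k+\sqrt{S})^{-r}$, and $(I_n-E_{k+1})^{-1}$ by a constant independent of $k$. Since $\|E_1\|=\|\mathcal{C}_{\sqrt{S}}(\widehat{Q}_1)\|<1$, one has $\|E_k\|\le\|E_1\|^{r^{k-1}}\le\|E_1\|$ for all $k\ge 1$; hence a Neumann-series bound gives $\|(I_n-E_{k+1})^{-1}\|\le(1-\|E_1\|)^{-1}$. Using $\widehat{Q}_k+\sqrt{S}=2\sqrt{S}(I_n-E_k)^{-1}$ and commutativity, $(\widehat{Q}_k+\sqrt{S})^{-1}=\tfrac12(I_n-E_k)(\sqrt{S})^{-1}$, whose norm is uniformly bounded by $\tfrac12(1+\|E_1\|)\|(\sqrt{S})^{-1}\|$ (this is where the nonsingularity of $S$ is crucial). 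Collecting these estimates produces a constant
\begin{equation*}
\mu=2\|\sqrt{S}\|\left(\tfrac12(1+\|E_1\|)\|(\sqrt{S})^{-1}\|\right)^{r}\frac{1}{1-\|E_1\|}>0,
\end{equation*}
for which $\|\widehat{Q}_{k+1}-\sqrt{S}\|\le\mu\|\widehat{Q}_k-\sqrt{S}\|^{r}$ holds for every $k$, proving q-order $r$ convergence.

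The only genuinely delicate point is verifying the commutativity of $\widehat{Q}_k$ with $\sqrt{S}$ and with itself across indices, which is needed to freely pull $E_k^r$ into the product form $(\widehat{Q}_k-\sqrt{S})^r(\widehat{Q}_k+\sqrt{S})^{-r}$; this is where the polynomial (in $S$) structure of the iterates, already used in the preceding section, is indispensable. Once commutativity is granted, the remainder is routine norm bookkeeping.
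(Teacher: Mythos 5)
Your proof is correct and follows essentially the same route as the paper's: both rest on the Cayley-transform closed form $\widehat{Q}_k-\sqrt{S}=2\sqrt{S}\,E_k(I_n-E_k)^{-1}$, where $E_k:=\mathcal{C}_{\sqrt{S}}(Q_1)^{r^{k-1}}$ satisfies $E_{k+1}=E_k^r$, followed by uniform norm bounds using $\|E_1\|<1$ and the nonsingularity of $S$. The only difference is organizational: you exploit commutativity (the iterates being rational functions of $S$) to obtain the exact identity $\widehat{Q}_{k+1}-\sqrt{S}=2\sqrt{S}(\widehat{Q}_k-\sqrt{S})^r(\widehat{Q}_k+\sqrt{S})^{-r}(I_n-E_{k+1})^{-1}$ before taking norms, whereas the paper bounds the ratio $\|\widehat{Q}_{k+1}-\sqrt{S}\|/\|\widehat{Q}_k-\sqrt{S}\|^r$ directly (assuming without loss of generality $\widehat{Q}_k\neq\sqrt{S}$), which avoids the commutativity argument and yields a slightly different, but equally valid, constant $\mu$.
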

\begin{proof}

Using~\eqref{eq:qk} and $\widehat{Q}_k = {Q}_{r^{k-1}}$, we see that
\begin{align*}
\widehat{Q}_k-\sqrt{S}&=2\sqrt{S}\mathcal{C}_{\sqrt{S}}({Q}_1)^{r^{k-1}}(I_n-\mathcal{C}_{\sqrt{S}}({Q}_1)^{r^{k-1}})^{-1}.
\end{align*}
Without loss of generality we assume that $\widehat{Q}_{k}\neq \sqrt{S}$ for all $k$. Otherwise, $\widehat{Q}_{\ell}= \sqrt{S}$ for all $\ell\geq k$.
It follows that
\begin{align*}
\dfrac{\|\widehat{Q}_{k+1}-\sqrt{S}\|}{\|\widehat{Q}_k-\sqrt{S}\|^r}&\leq \dfrac{\|2\sqrt{S}\mathcal{C}_{\sqrt{S}}(Q_1)^{r^{k}}\| \|I_n-\mathcal{C}_{\sqrt{S}}(Q_1)^{r^{k-1}}\|^r}{\|2\sqrt{S}\mathcal{C}_{\sqrt{S}}(Q_1)^{r^{k-1}}\|^r(1-\|\mathcal{C}_{\sqrt{S}}(Q_1)^{r^{k}}\|)}\\
&\leq \dfrac{2\|\sqrt{S}\|\|\mathcal{C}_{\sqrt{S}}(Q_1)^{r^{k-1}}\|^r
\|I_n-\mathcal{C}_{\sqrt{S}}(Q_1)^{r^{k-1}}\|^r
}{2^{r}\frac{\|\mathcal{C}_{\sqrt{S}}(Q_1)^{r^{k-1}}\|^r}{\|(\sqrt{S})^{-1}\|^r}
(1-\|\mathcal{C}_{\sqrt{S}}(Q_1)^{r^{k}}\|)
}
\\
&\leq2^{1-r}\|\sqrt{S}\|  \|(\sqrt{S})^{-1}\|^r\sup\limits_{k\geq 1}\dfrac{(1+\|\mathcal{C}_{\sqrt{S}}(Q_1)\|^{r^{k-1}})^r}{1-\|\mathcal{C}_{\sqrt{S}}(Q_1)\|^{r^{k}}}\\
&\leq  \mu:= \dfrac{2\|\sqrt{S} \|}{
1-\|\mathcal{C}_{\sqrt{S}}(Q_1)\|^{r}}{  \|(\sqrt{S})^{-1}\|^r}
<\infty.
\end{align*}
%
\end{proof}
Note that for $r=2$ the iteration $\widehat{Q}_{k+1}=(\widehat{Q}_{k}^2+S)(2\widehat{Q}_{k})^{-1}=\frac{1}{2}(\widehat{Q}_{k}+S\widehat{Q}_{k}^{-1})$ with initial $\widehat{Q}_{1}=\gamma I_n$,
which is equivalent to the Newton's method for solving the matrix square root~\cite{Highambook08}, converges to $\sqrt{S}$ with quadratic convergence.
%
For $r=3$ we have  $\widehat{Q}_{k+1}=(\widehat{Q}_{k}^3+3\widehat{Q}_{k}S)(3\widehat{Q}_{k}^2+S)^{-1}$, which provides that a cubically convergent iteration converges to $\sqrt{S}$ with initial $\widehat{Q}_{1}=\gamma I_n$. Similarly, by Algorithm~\ref{square} we can make $\widehat{Q}_{k+1}$ converges to $\sqrt{S}$ $q$-superlinearly with any desired $q$-order $r$.
However, without the accelerated technique, we can show in the following that the original sequence $\{Q_k\}$ only converges to $\sqrt{S}$ q-linearly.
\begin{Theorem}
Suppose that $S$ is a nonsingular matrix. Let $\|.\|$ be a matrix induced norm such that $\|\mathcal{C}_{\sqrt{S}}(\widehat{Q}_1)\|<1$. Then,
{
\begin{equation*}
\|{Q}_{k+1}-\sqrt{S}\| \leq \mu \|{Q}_k-\sqrt{S}\|,
\end{equation*}
}
for some $\mu \in (0,1)$ and sufficient large $k$; that is, ${Q}_k\rightarrow \sqrt{S}$ q-linearly with $q$-factor $\mu$.
\end{Theorem}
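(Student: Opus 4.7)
The plan is to adapt the closed-form representation of $Q_k-\sqrt{S}$ that already appears in the proof of Theorem~\ref{thm:qua}, but to track the linear step-to-step ratio instead of the power-of-$r$ cascade. Writing $C:=\mathcal{C}_{\sqrt{S}}(Q_1)=(\sqrt{S}-\gamma I_n)(\sqrt{S}+\gamma I_n)^{-1}$, equation~\eqref{eq:qk} rearranges to
\begin{equation*}
Q_k-\sqrt{S}=2\sqrt{S}\,C^{k}(I_n-C^{k})^{-1},
\end{equation*}
which is well defined because $1\notin\lambda(C)$ under the standing nonsingularity assumption on $S$. Because $Q_1=\gamma I_n$ is scalar, $C$ is a rational function of $\sqrt{S}$ and therefore commutes with $\sqrt{S}$ and with every polynomial in $C$; this commutativity is what makes the subsequent algebra clean.

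Next I would derive a one-step multiplicative relation. Using the identity $(I_n-C^{k})^{-1}C(I_n-C^{k})=C$, the closed form above gives
\begin{equation*}
Q_{k+1}-\sqrt{S}=2\sqrt{S}\,C^{k+1}(I_n-C^{k+1})^{-1}=(Q_k-\sqrt{S})\cdot C\cdot (I_n-C^{k})(I_n-C^{k+1})^{-1},
\end{equation*}
so the single-step factor is $E_k:=C(I_n-C^{k})(I_n-C^{k+1})^{-1}$. Taking the induced norm and applying the Neumann-series bound $\|(I_n-C^{k+1})^{-1}\|\le(1-\|C\|^{k+1})^{-1}$, valid because $\|C\|=\|\mathcal{C}_{\sqrt{S}}(\widehat{Q}_1)\|<1$ by hypothesis, I obtain
\begin{equation*}
\|Q_{k+1}-\sqrt{S}\|\le \frac{\|C\|\,(1+\|C\|^{k})}{1-\|C\|^{k+1}}\,\|Q_k-\sqrt{S}\|.
\end{equation*}

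Since $\|C\|^{k}\to 0$, the prefactor converges to $\|C\|<1$ as $k\to\infty$. Hence for any $\mu\in(\|C\|,1)$ there exists $k_0$ such that the prefactor is at most $\mu$ for every $k\ge k_0$, which is exactly the asymptotic q-linear statement with q-factor $\mu$ demanded by the theorem. I do not expect any real obstacle: the only delicate point is invoking commutativity of $C$ with $\sqrt{S}$ to pass freely between $C^{k}(I_n-C^{k})^{-1}$ and $(I_n-C^{k})^{-1}C^{k}$ and to factor out $(Q_k-\sqrt{S})$, and this is immediate from the choice $Q_1=\gamma I_n$.
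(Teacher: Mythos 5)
Your argument is essentially the paper's own proof: you use the same closed form $Q_k-\sqrt{S}=2\sqrt{S}\,\mathcal{C}_{\sqrt{S}}(Q_1)^{k}\bigl(I_n-\mathcal{C}_{\sqrt{S}}(Q_1)^{k}\bigr)^{-1}$ from \eqref{eq:qk}, the same one-step factorization with factor $\mathcal{C}_{\sqrt{S}}(Q_1)\bigl(I_n-\mathcal{C}_{\sqrt{S}}(Q_1)^{k}\bigr)\bigl(I_n-\mathcal{C}_{\sqrt{S}}(Q_1)^{k+1}\bigr)^{-1}$, the same bound $\|\mathcal{C}_{\sqrt{S}}(Q_1)\|\,(1+\|\mathcal{C}_{\sqrt{S}}(Q_1)\|^{k})/(1-\|\mathcal{C}_{\sqrt{S}}(Q_1)\|^{k+1})$, and the same limiting argument to choose $\mu<1$ for sufficiently large $k$. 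The only blemish is your explicit formula for $C$, which by the paper's convention should be $(\gamma I_n-\sqrt{S})(\gamma I_n+\sqrt{S})^{-1}$ rather than its negative; this sign slip does not affect the argument.
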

\begin{proof}

From~\eqref{eq:qk}, we have
\begin{align*}
{Q}_k-\sqrt{S}&=2\sqrt{S}\mathcal{C}_{\sqrt{S}}({Q}_1)^{k}(I_n-\mathcal{C}_{\sqrt{S}}({Q}_1)^{k})^{-1}.
\end{align*}
Thus,
\begin{align*}
\|{Q}_{k+1}-\sqrt{S}\|&=
{
\|({Q}_k-\sqrt{S})(I_n-\mathcal{C}_{\sqrt{S}}(Q_1)^k)}
\mathcal{C}_{\sqrt{S}}(Q_1)(I_n-\mathcal{C}_{\sqrt{S}}(Q_1)^{k+1})^{-1}\|
\\
&\leq \|\mathcal{C}_{\sqrt{S}}(Q_1)\|
\dfrac{1+ \|\mathcal{C}_{\sqrt{S}}(Q_1)\|^{k}}{1-\|\mathcal{C}_{\sqrt{S}}(Q_1)\|^{k+1}} \|Q_k-\sqrt{S}\|.
\end{align*}
Since $\dfrac{1+ \|\mathcal{C}_{\sqrt{S}}(Q_1)\|^{k}}{1-\|\mathcal{C}_{\sqrt{S}}(Q_1)\|^{k+1}} \rightarrow 1$ as $k\rightarrow \infty$, there exists a constant $k_0$ such that
\begin{equation*}
\|\mathcal{C}_{\sqrt{S}}(Q_1)\| \dfrac{1+ \|\mathcal{C}_{\sqrt{S}}(Q_1)\|^{k}}{1-\|\mathcal{C}_{\sqrt{S}}(Q_1)\|^{k+1}} < 1
\end{equation*}
for {
$k  \geq k_0$.}
Let $\mu = \dfrac{1+ \|\mathcal{C}_{\sqrt{S}}(Q_1)\|^{k_0}}{1-\|\mathcal{C}_{\sqrt{S}}(Q_1)\|^{k_0+1}}\|\mathcal{C}_{\sqrt{S}}(Q_1)\|  $, which completes the proof.
%
\end{proof}

{
In the next result, we show that the AB-algorithm still converges, while solving the square root of a singular matrix, which is hard to be handled in general. See~\cite{Meini2004} for further discussion.
}
\begin{Corollary}\label{DAconvthm2}
Suppose that $S$ is a singular matrix having $\lambda(S) \subseteq \mathbb{C}^+ \cup\{0\}$ and the null eigenvalues are semisimple.
Then,
\begin{enumerate}
\item
${Q}_k \rightarrow \sqrt{S}$ {sublinearly},

\item
$\widehat{Q}_k \rightarrow \sqrt{S}$ q-linearly with q-factor $\frac{1}{r}$.
\end{enumerate}
\end{Corollary}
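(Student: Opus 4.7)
The plan is to reduce the problem to a block-diagonal form in which the zero and nonzero parts of the spectrum of $S$ can be analyzed separately. Since $Q_1=\gamma I_n$ commutes with $S$, a direct induction on the recurrence~\eqref{fix} shows that every $Q_k$ commutes with $S$, and hence so does every $\widehat{Q}_k$. Because the null eigenvalues of $S$ are semisimple and disjoint from the rest of $\lambda(S)\subseteq\mathbb{C}^+$, there is a nonsingular $T$ such that
\[
T^{-1} S T = \bb 0_m & 0 \\ 0 & S_2 \eb,
\]
where $S_2$ is nonsingular with $\lambda(S_2)\subset\mathbb{C}^+$. By Sylvester's theorem on commuting matrices with disjoint spectra, $T^{-1}Q_kT=\mathrm{diag}(Q_k^{(1)},Q_k^{(2)})$ for every $k$, so the iteration~\eqref{fix} decouples into two independent recursions with initial data $Q_1^{(1)}=\gamma I_m$ and $Q_1^{(2)}=\gamma I_{n-m}$.

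On the nonsingular block, Lemma~\ref{lem:qk} and the preceding q-linear theorem apply verbatim, giving $Q_k^{(2)}\to\sqrt{S_2}$ q-linearly, while Theorem~\ref{thm:qua} yields $\widehat{Q}_k^{(2)}\to\sqrt{S_2}$ with q-order $r$. On the zero block the recursion becomes $Q_{k+1}^{(1)}=\gamma Q_k^{(1)}(\gamma I_m+Q_k^{(1)})^{-1}$, and because $Q_1^{(1)}=\gamma I_m$ is scalar an induction shows $Q_k^{(1)}=q_k I_m$ with $q_{k+1}=\gamma q_k/(\gamma+q_k)$. Passing to reciprocals gives $1/q_{k+1}=1/q_k+1/\gamma$, whence $q_k=\gamma/k\to 0=\sqrt{0}$. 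The ratio $q_{k+1}/q_k=k/(k+1)\to 1$ confirms sublinear convergence; the accelerated variant then satisfies $\widehat{Q}_k^{(1)}=(\gamma/r^{k-1})I_m$ (since $\widehat{Q}_k=Q_{r^{k-1}}$), which is q-linear with q-factor exactly $1/r$.

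Combining the two blocks completes both assertions. For $Q_k$, the sublinear rate on the zero block is slower than the q-linear rate on the nonsingular block, so the overall rate is sublinear. For $\widehat{Q}_k$, the q-linear rate with factor $1/r$ on the zero block dominates (for sufficiently large $k$) the much faster q-order-$r$ convergence on the nonsingular block, yielding overall q-linear convergence with q-factor $1/r$. The main technical obstacle is the decoupling step: one must carefully invoke commutativity together with the semisimplicity hypothesis to justify the simultaneous block-diagonalization via Sylvester's theorem, since without semisimplicity the zero block of $T^{-1}ST$ would acquire a nontrivial nilpotent part and the scalar reduction $Q_k^{(1)}=q_k I_m$ would fail. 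Once the decoupling is secured, the remainder is a one-line scalar calculation plus a direct appeal to the nonsingular-case results already established.
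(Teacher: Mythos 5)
Your proof is correct and follows essentially the same route as the paper: split off the (semisimple) zero eigenvalues by a block-diagonalization, observe that the iteration on the zero block reduces to the scalar recursion $q_{k+1}=\gamma q_k/(\gamma+q_k)$ giving $Q_k^{(1)}=(\gamma/k)I$ and hence $\widehat{Q}_k^{(1)}=(\gamma/r^{k-1})I$, and invoke the nonsingular-case results (Lemma~\ref{lem:qk}, Theorem~\ref{thm:qua}) on the remaining block. The only cosmetic difference is that you justify the decoupling via commutativity of $Q_k$ with $S$ plus the Sylvester-equation argument, whereas the paper conjugates by the Jordan form of $\sqrt{S}$ and cites Lemma~\ref{lem:qk} directly; your justification is, if anything, slightly more explicit.
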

\begin{proof}
Let $P$ be an invertible matrix so that
$\mbox{diag}(0_p, J_{n-p})
= P \sqrt{S} P^{-1} $
be the Jordan canonical form of $\sqrt{S}$ with $\lambda(J_{n-p})\subset\mathbb{C}^+$. Upon the use of substitution and Lemma~\ref{lem:qk},  we have
{
\begin{eqnarray*}
P {Q}_k P^{-1}= \mbox{diag}({Q}_k^{(11)}, {Q}_k^{(22)}),
\end{eqnarray*}
}
where ${Q}_k^{(11)}$ is derived directly by~\eqref{fix} and
${Q}_k^{(22)}$ is followed from Lemma~\ref{lem:qk} such that
\begin{align*}
 {Q}_k^{(11)} &=\dfrac{\gamma I_n}{k},\\
 {Q}_k^{(22)} &=J_{n-p}(I+\mathcal{C}_{J_{n-p}}(\gamma I_{n-p})^{k})(I-\mathcal{C}_{J_{n-p}}(\gamma I_{n-p})^{k})^{-1}.
\end{align*}
Since $ \{{Q}_k^{(11)}\}$ converges to zero sublinearly
and $ \{{Q}_k^{(22)}\}$ converges to zero  q-linearly,  $\{{Q}_k\}$ converges to $\sqrt{S}$ sublinearly.
 In the similar way, we have
{
\begin{eqnarray*}
P \widehat{Q}_k P^{-1}= \mbox{diag}(\widehat{Q}_k^{(11)}, \widehat{Q}_k^{(22)}),
\end{eqnarray*}
}
where
\begin{align*}
\widehat{Q}_k^{(11)} &=\dfrac{\gamma I_n}{r^k},\\
\widehat{Q}_k^{(22)}  &= J(I_n+\mathcal{C}_{J}(\gamma I_{n})^{r^{k-1}})(I_n-\mathcal{C}_{J}(\gamma I_{n})^{r^{k-1}})^{-1}.
\end{align*}
Since $ \{\widehat{Q}_k^{(11)}\}$ converges to zero q-linearly with q-factor $r$, it follows that $\{\widehat{Q}_k\}$ converges to $\sqrt{S}$ q-linearly with {
q}-factor $r$.
\end{proof}
\begin{Remark}
Once the spectral radius of $\sqrt{S}$ in \eqref{eq:space} is not less than 1, we can apply the M\"{o}bius transformations to shift eigenvalues of $S$ such that $\rho(\mathcal{C}_{\sqrt{S}}(\gamma I_n))$ in~\eqref{eq:genAB} is less than 1.  %
%
We would like our $\gamma$ to have a capacity such that the optimal convergence speed  in~Algorithm~\ref{square} can be achieved. To this end, we seek $\gamma$ to be equal to the optimal solution $\gamma_0$ of the following min-max problem
\begin{align}\label{eq:gamma}
\gamma_0:=\min\limits_{\gamma>0}\max\limits_{\lambda\in \lambda(S)} |\frac{\sqrt{\lambda}-\gamma}{\sqrt{\lambda}+\gamma}|.
\end{align}
%
This min-max problem is also known as the ADI min-max problem~\cite{MR3076884}.
Numerical approaches for solving~\eqref{eq:gamma} are numerous.  Here
we will not discuss it further. The reader is referred to~\cite{MR2494951,MR3076884,MR1742324} for example.

\end{Remark}

\section{Concluding remarks}
{By computing left null spaces, the contribution of this work is twofold. Theoretically, it provides an iterative method, embedded with a discrete-type flow property, to solve the stable deflating subspace of a matrix pencil $A-\lambda B$. This property then allows us to advance the iterative method.  Numerically, we have discussed with the numerical behavior of the AB-algorithm, including both low computational cost and high numerical reliability. Since the solution of the matrix square root can be interpreted in terms of the stable deflating subspace of a matrix pencil, our method can be used to compute the matrix square root. We show that the speed of convergence has q-order $r$, and even more, for the singular case, where $S$ is singular having no negative real eigenvalues, and the null eigenvalues are semisimple, the iteration still succeeds with a linear rate of convergence.

Particularly, since Algorithm~\ref{square} corresponds to Newton iteration with $r=2$ and the initial guess $\gamma I_n$, the limiting accuracy should not be worse than $\kappa(\sqrt{S})\epsilon$, where $\kappa(\sqrt{S})$ is the condition number of $\sqrt{S}$ and $\epsilon$ is machine precision \cite[Table 6.2 on p.147]{Highambook08}. Numerically, it is known that a stable variant of Newton iteration, the IN iteration \cite[(6.20) on p.142]{Highambook08}, has been proposed with the limiting accuracy equal to $\epsilon$. Whether the AB-algorithm for $r=2$ has the desired accuracy or even more for $r> 2$ is something worthy of further investigation. Numerically, modified AB-algorithms for $r=2$ were also developed for solving generalized continuous/discrete-time
algebraic Riccati equations~\cite{LI20112269} and $\star$-Sylvester matrix equation~\cite{Lin20152171}. How to apply the accelerated techniques in the work for solving other matrix equations (for example, matrix $p$th root) leads to the work in future.
}

\section*{Acknowledgment}
This research work is partially supported by the Ministry of Science and Technology and the National Center for Theoretical Sciences in Taiwan.

%
\def\cprime{$'$}

\end{document}